\newcommand{\F}{\mathbb F}
\newcommand{\C}{\mathcal C}
\newcommand{\OO}{\mathcal O}
\newcommand{\Og}{O_g}
\DeclareMathOperator{\gothp}{\mathfrak{p}}
\DeclareMathOperator{\Q}{\mathbb{Q}}
\DeclareMathOperator{\Z}{\mathbb{Z}}
\DeclareMathOperator{\ord}{\mathrm{ord}}
\newcommand{\softO}{\widetilde{O}}
\DeclareMathOperator{\Jac}{\mathrm{Jac}}
\DeclareMathOperator{\Res}{\mathrm{Res}}
\DeclareMathOperator{\Disc}{\mathrm{Disc}}
\DeclareMathOperator{\End}{\mathrm{End}}
\DeclareMathOperator{\Gal}{\mathrm{Gal}}
\DeclareMathOperator{\LC}{\mathrm{LC}}
\DeclareMathOperator{\Frac}{\mathrm{Frac}}
\DeclareMathOperator{\GCD}{GCD}
\newcommand{\R}{\mathbb{R}}
\newtheorem{lemma}{Lemma}
\newtheorem{proposition}[lemma]{Proposition}
\newtheorem{definition}[lemma]{Definition}
\newtheorem{theorem}[lemma]{Theorem}
\DeclareMathOperator{\Ker}{Ker}
\newcounter{countproblem}
\begin{document}
\title{Counting points on hyperelliptic curves with explicit real multiplication in arbitrary genus}
\author{Simon Abelard \\[0.2cm]
  {\normalsize Université de Lorraine, CNRS, Inria} \\
   {\normalsize University of Waterloo }}
   \date{    simon.abelard@uwaterloo.ca \\
   Cheriton School of Computer Science\\
   Waterloo, Ontario, N2L 3G1 \\
   Canada
}

\maketitle

\begin{abstract} We present a probabilistic Las Vegas algorithm for computing
  the local zeta function of a genus-$g$ hyperelliptic curve defined over
  $\mathbb F_q$ with explicit real multiplication (RM) by an order $\Z[\eta]$
  in a degree-$g$ totally real number field.

  It is based on the approaches by Schoof and Pila in a more favorable case
  where we can split the $\ell$-torsion into $g$ kernels of endomorphisms, as
  introduced by Gaudry, Kohel, and Smith in genus 2. To deal with these kernels
  in any genus, we adapt a technique that the author, Gaudry, and
  Spaenlehauer introduced to model the $\ell$-torsion by structured polynomial
  systems.  Applying this technique to the kernels, the systems we obtain are
  much smaller and so is the complexity of solving them.

  Our main result is that there exists a constant $c>0$ such that, for any
  fixed $g$, this algorithm has expected time and space complexity $O((\log
  q)^{c})$ as $q$ grows and the characteristic is large enough. We prove that
  $c\le 9$ and we also conjecture that the result still holds for $c=7$.

  { {\it Keywords:} Hyperelliptic curves, Real multiplication, Local zeta function, Multi-homogeneous polynomial systems, Schoof-Pila's algorithm} 


\end{abstract}

\section{Introduction}

Due to its numerous applications in cryptology, number theory, algebraic
geometry or even as a primitive used in other algorithms, the problem of
counting points on curves and Abelian varieties has been extensively studied
over the past three decades. Among the milestones in the history of
point-counting, one can mention the first polynomial-time algorithm by
Schoof~\cite{Sc85} for counting points on elliptic curves, and the subsequent
extension to Abelian varieties by Pila~\cite{Pi90}. Using similar approaches,
we design a probabilistic algorithm for computing the local zeta functions of
hyperelliptic curves of arbitrary fixed genus $g$ with explicit real
multiplication and bound its complexity.

Given an Abelian variety of dimension $g$ over a finite field $\F_{q}$, Pila's
algorithm computes its local zeta function in time $(\log q)^\Delta$, where
$\Delta$ is doubly exponential in $g$. Further contributions were made
in~\cite{HI98,AH01} so that this exponent $\Delta$ is now proven to be polynomial
in $g$ in general, and even linear in the hyperelliptic case~\cite{AGS17}.

In genus $2$, a tailor-made extension of Schoof's algorithm due to Gaudry,
Harley and Schost~\cite{GH00,GS04,GS12} allows to count points in time
$\softO(\log^8q)$. Yet, this remains much larger than the complexity of the
Schoof-Elkies-Atkin (SEA) algorithm~\cite{Sc95}, which is the standard for
elliptic point-counting in large characteristic and runs in $\softO(\log^4q)$
bit operations. For genus-2 curves with explicit real multiplication (RM), i.e.
curves having an additional endomorphism for which an explicit expression is
known, a much more efficient point-counting algorithm is introduced
in~\cite{gaudry2011counting} with a bit complexity in $\softO(\log^5q)$, thus
narrowing the gap between genus 1 and 2. 

These algorithms were extended to genus-3 hyperelliptic curves in~\cite{AGS18}
with an asymptotic complexity in $\softO(\log^{14}q)$ bit operations that is
decreased to $\softO(\log^{6}q)$ bit operations when the curve has explicit RM.

The aim of this paper is to study the asymptotic complexity of point-counting
on hyperelliptic curves with explicit RM when $g$ is arbitrary large. In this
case, we bound the exponent of $\log q$ by $9$ and therefore remove the
dependency on $g$ from the exponent of $\log q$.

Another way to avoid such a painful dependency in $g$ in the complexity without
restricting to such particular cases is to use the $p$-adic methods, in the
spirit of Satoh's and Kedlaya's algorithms~\cite{Satoh00,Ked01} for elliptic
and hyperelliptic curves. These methods have also been extended beyond the
hyperelliptic case~\cite{tuitman2017counting,costa} and one can also mention
the algorithms of Lauder and Lauder-Wan that also hold for very general
varieties~\cite{Lauder04,LaWa02}.  Although these methods are polynomial in
$g$, they are exponential in $\log p$ and therefore cannot be used in large
characteristic. 

Indeed, the $\ell$-adic approaches derived from Schoof's algorithm and the
$p$-adic approaches are complementary when either $g$ or $p$ is small but we
still lack a classical algorithm running in time polynomial in both $g$ and
$\log q$. However, for counting points on reductions modulo many primes $p$ of
the same curve, an algorithm introduced by Harvey in~\cite{Harvey14} is
polynomial in $g$ and polynomial \emph{on average} in $\log p$.

In this paper, we follow the spirit of the Schoof-Pila algorithm and recover
the local zeta function by computing the characteristic polynomial $\chi_\pi$
of the action of the Frobenius endomorphism $\pi$ on the $\ell$-torsion
subgroups for sufficiently many primes $\ell$. The key to our complexity
result is that, thanks to the real multiplication, it is sufficient to have
$\pi$ act on much smaller subgroups of the $\ell$-torsion, at least for a
postive proportion of the primes $\ell$. The following definition sums up the
assumptions that we make on our particular (families of) curves. 

\begin{definition}[Explicit real multiplication]\label{def:rmexp}
  We say that a curve $\C$ has explicit real
multiplication by $\mathbb Z[\eta]$ if the subring $\mathbb Z[\eta]\subset
\End(\Jac(\C))$ is isomorphic to an order in a totally real degree-$g$ number
field, and if we have explicit formulas describing $\eta(P-P_\infty)$ for some
fixed base point $P_\infty$ and a generic point $P$ of $C$. 
\end{definition}

\paragraph*{Remark.}

Once a rational Weierstrass point $P_\infty$ is picked on $\C$, we represent
elements (reduced divisors) of $\Jac\C$ as formal sums $\sum_{i=1}^w
(P_i-P_\infty)$ and call $w$ the weight of the divisor. Alternatively, we
represent elements of $\Jac\C$ using the Mumford form $\langle u,v\rangle$
where $u$ and $v$ are polynomials in $\F_q[X]$ with $\deg u=w$ and $u| v^2-f$.
We refer to~\cite[Sec.~4.4 \& 14.1]{handbook} for more background on Jacobians
of hyperelliptic curves. In cases where $\C$ does not have an
odd-degree Weierstrass model, we can work in an extension of degree at most
$2g+2$ of the base field in order to ensure the existence of a rational
Weierstrass point. 

 By explicit formulas,
we mean $2g+2$ polynomials in $\mathbb F_q[x,y]$ which we denote by $(\eta^{(u)}_i(x,y))_{i\in\{0,1,\ldots,g\}}$ and
$(\eta^{(v)}_i(x,y))_{i\in\{0,1,\ldots,g\}}$ such that,
when $\C$ is given in odd-degree Weierstrass form, the Mumford coordinates of
$\eta( (x, y)-P_\infty)$ are $$\Big\langle X^g+ \sum_{i=0}^{g-1}\big({\eta^{(u)}_i(x,y)}/{\eta^{(u)}_g(x,y)}\big) X^i,
\sum_{i=0}^{g-1}
\big({\eta^{(v)}_i(x,y)}/{\eta^{(v)}_g(x,y)}\big)X^i\Big\rangle,$$ where $(x,y)$
is the generic point of the curve.
\smallskip

As in~\cite{gaudry2011counting,AGS18}, we consider primes $\ell\in\mathbb Z$
such that $\ell\mathbb Z[\eta]$ splits as a product $\gothp_1\cdots\gothp_g$ of
prime ideals. Computing the kernels of an endomorphism $\alpha_i$ in each
$\gothp_i$ provides us with an algebraic representation of the $\ell$-torsion
$\Jac(\C)[\ell]\subset \Ker \alpha_1 + \cdots + \Ker\alpha_g$. Then, we compute
from this representation integers $a_0,\ldots,a_{g-1}$ in $\Z/\ell\Z$ such that
the sum $\pi+\pi^{\vee}$ of the Frobenius endomorphism and its dual equals
$a_0+a_1\eta+\cdots +a_{g-1}\eta^{g-1}\bmod \ell$. Once enough modular
information is known, the values of the $a_i$'s such that $\pi+\pi^{\vee} =
\sum_{i=0}^{g-1}a_i\eta_i$ are recovered via the Chinese Remainder Theorem and
the coefficients of the characteristic polynomial of the Frobenius can be
directly expressed in terms of the $a_i$'s.
 
Computing the kernels of the endomorphisms $\alpha_i$ is the dominant step in
terms of complexity and thus the cornerstone of our result. We still model
these kernels by polynomial systems that we then have to solve, but the
resultant-based techniques that were used in~\cite{gaudry2011counting}
and~\cite{AGS18} are no longer satisfying when $g$ is arbitrary large. We
therefore use the modelling strategy of~\cite{AGS17} and apply it to the
kernels instead of applying it to the whole $\ell$-torsion. The polynomial
systems we derive from this approach are in fact very similar to those
of~\cite{AGS17}, except that our kernels are comparable in size to the
\enquote{$\ell^{1/g}$-torsion}, resulting in much smaller degrees and
ultimately in a complexity gain by a factor $g$ in the exponent of $\log q$,
decreasing it from linear to constant.  Using the geometric resolution
algorithm just as in~\cite{AGS17}, we solve these systems in time $K(\log
q)^{9+o(1)}$ where $K$ depends on $\eta$ (and thus on $g$ too) but not on $q$.
It is interesting to note that this result suffers from the pessimistic cubic
bounds on the degrees of Cantor's polynomials established in~\cite{AGS17} and
that---assuming quadratic bounds as proven in genus 1, 2 and 3---we get a
complexity in $K(\log q)^{7+o(1)}$, which is close to the complexity bound
proven in~\cite{AGS18} for genus-3 hyperelliptic curves with explicit RM. 

For hyperelliptic curves with RM, we have thus been able to eliminate the
dependency in $g$ in the exponent of $\log q$, but this does not mean that our
algorithm reaches polynomial-time complexity in both $g$ and $\log q$. Indeed,
we also discuss the reasons why the \enquote{constant} $K$ depends
exponentially on $g$. Among them, we shall see that some can actually be
discarded by considering even more particular cases while some appear to be
inherent to our geometric-resolution based approach. This remaining exponential
dependency also explains why this algorithm is currently not a practical one in
genus $\ge 4$, although its complexity seems close to that of the algorithm
presented in~\cite{AGS18}.

\paragraph{Organization.} 

In Section~\ref{sec:overanyg}, we give an overview of our point-counting
algorithm, along with an example of families of hyperelliptic curves of
arbitrary high genus with RM by a real subfield of a cyclotomic field. In
particular, we prove a bound on the size and number of primes $\ell$ to consider
in our algorithm. Section~\ref{sec:cplxrmanyg} focuses on the main primitive of
our algorithm: the computation of a non-zero element in the kernel of an
endomorphism $\alpha$ whose degree is a small multiple of $\ell^2$. This section
adapts methods and results of~\cite[Sec.~4 \& 5]{AGS17} to design structured
polynomial systems whose solution sets are subsets of $J[\alpha]$.
Section~\ref{sec:cplxrmangyg} concludes on the complexity of solving these
systems, and on the overall complexity result. We also
present an analysis on the dependency of the final complexity in $g$,
investigating the various places where exponential factors may occur and how to
avoid them when it is possible.

\section{Overview}\label{sec:overanyg} 
The main result of this paper can be summarized by the following theorem, which makes the dependency on $\eta$ explicit. 
\begin{theorem}\label{th:mainanyg}
  For any $g$ and any $\eta\in\overline{\Q}$ such that $\Q(\eta)$ is a
  totally-real number field of degree $g$, there exists an explicitly
  computable $c(\eta)>0$ such that there is an integer $q_0(g,\eta)$ such that
  for all prime power $q=p^n$ larger than $q_0(g,\eta)$ with $p\ge (\log
  q)^{c(\eta)}$ and for all genus-$g$ hyperelliptic curves $\C$ with explicit RM
  by $\Z[\eta]$ defined over $\F_q$, the local zeta function of $\C$ can be
  computed with a probabilistic algorithm in expected time bounded by $(\log
  q)^{c(\eta)}$.
\end{theorem}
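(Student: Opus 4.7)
The plan is to follow the Schoof--Pila framework, recovering the characteristic polynomial $\chi_\pi$ of the Frobenius by computing its reduction modulo sufficiently many primes $\ell$ and reconstructing its integer coefficients via the Chinese Remainder Theorem. The novelty lies in exploiting the RM structure: rather than working with the full $\ell$-torsion, which has cardinality $\ell^{2g}$, we restrict to the kernels of endomorphisms $\alpha_i\in\gothp_i$ attached to the primes above $\ell$ in $\Z[\eta]$, for primes $\ell$ that split completely as $\ell\Z[\eta]=\gothp_1\cdots\gothp_g$. Each such kernel has size roughly $\ell^{2}$ (since $\deg\alpha_i=O(\ell^2)$), which is the source of the eventual complexity gain.

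First I would establish, in Section~\ref{sec:overanyg}, that sufficiently many totally split primes exist and are small enough. By the Chebotarev density theorem applied to the Galois closure of $\Q(\eta)$, a positive proportion of rational primes split completely in $\Z[\eta]$; combined with the Weil bound on the coefficients of $\chi_\pi$, the effective Chebotarev estimates yield that it suffices to use $O_\eta(\log q)$ split primes, each of size $O_\eta(\log q)$. At each such $\ell$, it is enough to recover $\pi+\pi^{\vee}\bmod\ell$, which by the RM identification amounts to recovering the $g$ integers $a_0,\dots,a_{g-1}\bmod\ell$ defined by $\pi+\pi^{\vee}=\sum a_j\eta^j$.

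Second, for each split $\ell$ and each $i$, I would compute a non-zero element of $\Ker\alpha_i$ by the polynomial-system approach of Section~\ref{sec:cplxrmanyg}, adapting the multi-homogeneous modelling of~\cite{AGS17} to the kernel rather than to the full $\ell$-torsion. The explicit formulas from Definition~\ref{def:rmexp} translate $\alpha_i(D)=0$ into a structured system over $\F_q$ whose Bézout-type degree is controlled by the degrees of Cantor's composition-and-reduction polynomials, giving a total degree in $O(\ell^{3})$ (or $O(\ell^{2})$ under the conjectural quadratic bound). The geometric resolution algorithm then solves the system in time $K(\eta)(\log q)^{9+o(1)}$ (respectively $K(\eta)(\log q)^{7+o(1)}$), where $K(\eta)$ absorbs all the dependencies in $g$ and $\eta$. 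Given a non-zero $D\in\Ker\alpha_i$, one computes $\pi(D)+\pi^\vee(D)$ by Frobenius evaluation and scalar multiplication in $\Jac\C$, and matches against $\sum a_j\eta^j(D)$ to recover the tuple $(a_0,\dots,a_{g-1})\bmod\gothp_i$.

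Finally, CRT over the split primes $\ell$ and their factors $\gothp_i$ reconstructs the $a_j$'s as integers, from which $\chi_\pi$ is read off directly. The main obstacle, and the part that really has to be earned in Sections~\ref{sec:cplxrmanyg}--\ref{sec:cplxrmangyg}, is the per-prime cost: one must both bound the geometric degrees of the kernel systems in terms of $\ell$ (which in turn depends on pessimistic versus conjectural bounds on Cantor's polynomials) and verify that geometric resolution applies to these multi-homogeneous systems with a controlled constant. Multiplying the $O_\eta(\log q)$ primes by the size $O_\eta(\log q)$ of each $\ell$, the per-prime polynomial-system solve in $(\log q)^{9+o(1)}$, and the arithmetic in $\Jac\C$, yields an overall complexity of the form $(\log q)^{c(\eta)}$ with $c(\eta)\le 9$ for $q$ larger than a threshold $q_0(g,\eta)$ ensuring that all the asymptotic estimates (effective Chebotarev, Weil bound, solvability of split primes) kick in.
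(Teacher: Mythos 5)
Your outline follows the paper's strategy closely: reduce $\chi_\pi$ to $\psi=\pi+\pi^\vee$, determine $\psi\bmod\ell$ for split primes $\ell$ by acting on each $J[\gothp_i]$, model each kernel by a multihomogeneous system adapted from~\cite{AGS17}, solve by geometric resolution, and CRT. However, one load-bearing step is missing: knowing $\deg\alpha_i=O(\ell^2)$ does \emph{not} by itself give the multihomogeneous B\'ezout bound $O_\eta(\ell^3)$ that your complexity count relies on. What is actually needed is that each $\gothp_i$ contains a representative $\alpha_i=\sum_{j=0}^{g-1}\alpha_{i,j}\eta^j$ with \emph{all} coordinates balanced, $|\alpha_{i,j}|=O_\eta(\ell^{1/g})$; this is the paper's Lemma~\ref{lem:smallgenanyg}, proved via Minkowski's theorem on the lattice $\tau(\gothp_i)\subset\R^g$. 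Only then do the $\alpha_i$-division polynomials (obtained from Cantor's cubic bound applied to each coordinate, Theorem~\ref{lem:deg_delta}) have per-variable degree $d_x=O_\eta(\ell^{3/g})$, so that the multihomogeneous B\'ezout estimate $d_x^g=O_\eta(\ell^3)$ controls the geometric-resolution cost. Without the balance condition, an $\alpha_i$ of norm $O(\ell)$ could have one coordinate of size $O(\ell)$ and the degree in that $x_i$ would be $O(\ell^3)$, wrecking the $O(\ell^3)$ Bézout bound and giving $O(\ell^{3g})$ instead; your ``since $\deg\alpha_i=O(\ell^2)$'' does not rule this out.

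Two smaller points. First, you propose to compute $\pi(D)+\pi^\vee(D)$ directly, but $\pi^\vee$ has no cheap evaluation on $\Jac\C$; the paper instead tests $k_i\pi(D_i)=\pi^2(D_i)+qD_i$, i.e.\ uses $\psi\pi=\pi^2+q$, turning the determination of $k_i$ into a discrete log in the $\ell$-cyclic group $\langle\pi(D_i)\rangle$, and then recovers the $a_j$ from the $k_i$ through the Vandermonde system in the eigenvalues $\lambda_i$. Second, your per-prime versus total accounting is muddled (a per-system cost of $(\log q)^{9+o(1)}$ over $O(\log q)$ primes would give $(\log q)^{10}$); the paper's count is $O_\eta(\ell^6(\log q)^2)$ per prime, which with $\ell=O_\eta(\log q)$ and $O_\eta(\log q)$ primes yields the stated $O_\eta((\log q)^9)$. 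Finally, a complete argument also requires the non-genericity case analysis (Proposition~\ref{prop:modelnganyg}) so that a non-zero kernel element is always found; you acknowledge this needs to be earned, but it is not optional.
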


In Section~\ref{sec:cplxrmangyg}, we also bound $c$ by $9+\varepsilon(\eta)$
and conjecture that it should be $7+\varepsilon(\eta)$, where the term
$\varepsilon(\eta)$ is added to take into account factors depending only on
$\eta$ and factors in $O((\log q)^{\varepsilon})$ for any $\varepsilon >0$.  In
the whole paper, we will make use of the following notation to implicitly
include those terms.

\paragraph*{Notation.}
We say that a function $F(\eta,q)$ is in $O_\eta((\log q)^c)$ if for any fixed
$\eta$ and any $\varepsilon >0$ we have $F(\eta,q)\in O((\log
q)^{c+\varepsilon})$, in the sense of the usual $O()$-notation. Using this
notation, our point-counting algorithm runs in time
$O_\eta((\log q)^9)$.  We emphasize once more that this notation includes
polylogarithmic factors in $q$.

\paragraph*{Remark.}
It is also possible to explicit the lower bound on $p$ required by Theorem~\ref{th:mainanyg}. Indeed, $p$ is only constrained by Proposition~\ref{prop:prop3} of Section~\ref{sec:cplxsolsys}. With the $O_\eta()$ notation, this condition amounts to $p$ being greater than a quantity in $O_\eta( (\log p)^3)$.

\subsection{Families of RM curves}\label{sec:rmfamily}

We present one-dimensional families of hyperelliptic curves from~\cite{TTV},
constructed via cyclotomic covers. They have an affine model
$\C_{n,t}:Y^2=D_n(X)+t$, where $t$ is a parameter and $D_n$ is the $n$-th
Dickson polynomial with parameter 1 defined inductively by $D_0(X)=2$,
$D_1(X)=X$, and \[D_n(X)=XD_{n-1}(X)-D_{n-2}(X).\] Since $D_n(X)$ has degree
$n$, setting $n=2g+1$ for odd $n$ yields a one-dimensional family $\C_{n,t}$ of
genus $g$ hyperelliptic curves given by an odd-degree Weierstrass model. Their
Jacobians all have an explicit endomorphism $\eta$, and when $n$ is prime,
~\cite[Prop.~2]{KoSm} shows that $\Z[\eta]\cong\Z[\zeta_n+\zeta_n^{-1}]$,
where $\zeta_n$ is a primitive $n$-th root of unity over $\Q$. Note that the
construction of an explicit endomorphism is still possible whenever $n=2g+1$ is
not prime, but then the curves in $\C_{n,t}$ have non-simple Jacobians, which
means there are better alternatives than using our algorithm for counting points
on them.

Another family based on Artin-Schreier covering is detailed in the same paper
but these curves have genus $(p-1)/2$ where $p$ is the characteristic of the
base field, so that our complexity study using the $O_\eta()$ notation would be
pointless in that case.  Since $g$ becomes much larger than $\log p$ in that
case, it would be more efficient to use $p$-adic algorithms anyway.

Let $\C$ be a (genus-$g$) hyperelliptic curve in the family $\C_{2g+1,t}$,
defined over a finite field $\F_q$.  In~\cite{KoSm}, Kohel and Smith compute
formulas for the Mumford form of $\eta\left( (x,y)-P_{\infty} \right)$, where
$(x,y)$ is the generic point on $\C$. These formulas are given explicitly for
some examples in genus 2 and 3, and an algorithm~\cite[Algorithm~5]{KoSm} is
presented to compute them for any $\C$.  This algorithm has a time complexity
in $O(g^2)$ field operations and requires to store $O(g^3)$ field elements.
Thus, given a curve from that family as input, an explicit endomorphism of its
Jacobian can be computed once and for all in $\softO(g^3\log q)$ time and space
complexity, which is negligible compared to the cost of counting points on the
curve.  

\subsection{The characteristic equation}\label{sec:psipi}

Let us consider a genus-$g$ hyperelliptic curve $\C$ over $\F_q$ with explicit
RM in the sense of Definition~\ref{def:rmexp} and let $J$ be its Jacobian. We
denote by $\pi$ the Frobenius map $x\mapsto x^q$ over $\overline{\F_q}$. It extends
to an endomorphism of $J$ which we also denote by $\pi$. The dual endomorphism
of $\pi$ is denoted by $\pi^\vee$ and satisfies $\pi\pi^\vee=\pi^\vee\pi=q$.

Counting points (or computing local zeta functions) amounts to computing
$\chi_\pi$, the characteristic polynomial of the Frobenius endomorphism.
Following Schoof and Pila, we do it by computing $\chi_\pi\bmod\ell$ for
sufficiently many primes $\ell$ coprime to $q$, using the fact that
$\chi_\pi\bmod\ell$ is the characteristic polynomial of the Frobenius
endomorphism acting on $J[\ell]$.  This approach works thanks to the Weil
conjectures, which were proven by Dwork, Deligne and Grothendieck (see for
instance~\cite{Deligne} for the original proof). For our purpose, the important
consequences of the Weil conjectures are that $\chi_\pi=\sum_{i=0}^{2g} c_iX^i$
is a degree-$2g$ polynomial whose coefficients $c_i$ are integers such that for
any $i\le 2g$, we have $c_{i}=q^{g-i}c_{2g-i}$ and $\vert c_i\vert \le
{2g\choose i} q^{(2g-i)/2}$.

As in~\cite{gaudry2011counting,AGS18}, let us consider $\psi=\pi+\pi^{\vee}$
and recall that $\psi\in\Q[\eta]$. We still have $\psi\pi=\pi^2+q$ and once
again, we test this equation to determine $\psi$ instead of the characteristic
equation of $\pi$.  The link between $\psi$ and $\pi$ needs to be made
explicit, which is the aim of the present section.  Using the above relation
between the $c_i$'s, we can write
$$\chi_{\pi}(X)=\sum_{i=0}^g\sigma_i(X^{2g-i}+q^{g-i}X^{i}),$$ with
$\sigma_i=c_{2g-i}$ for $i\ne g$ and $\sigma_g=c_g/2$. Then we propagate the
Weil bounds to the $\sigma_i$'s and get $|\sigma_i|=|c_{2g-i}|\le {2g\choose i}
q^{i/2}$ for i$\ne g$ and $|\sigma_g|\le {2g\choose g} q^{g/2}/2$.

Since we have $q^{-g}(\pi^{\vee})^g\chi_{\pi}(\pi)=0$ by the Cayley-Hamilton
theorem, and using the fact that $\pi\pi^{\vee}=q$, we rewrite that as
\[\sum_{i=0}^g \sigma_{g-i}(\pi^i+(\pi^{\vee})^i)=0.\] 

Our plan is to compute $\chi_\pi\bmod\ell$ by determining $\psi$. Let us write
$\psi=\sum_{i=0}^{g-1}a_i\eta^i$, the goal of the section is to prove bounds on
the coefficients $a_i$, so that we can estimate the number and maximal size of
primes $\ell$ required to compute $\psi$ without ambiguity. Note that $\psi$ is
in the maximal order of $\Q(\eta)$, but not necessarily in $\Z[\eta]$.
However, as in~\cite{gaudry2011counting,AGS18}, $\Z[\eta]$ has finite index in
the maximal order and the possible common denominator of the $a_i$'s has to
divide $\left[\mathcal{O}_{\Q(\eta)}:\Z[\eta]\right]$.  This denominator
entails that additional primes may be required to fully determine $\psi$,
however $\left[\mathcal{O}_{\Q(\eta)}:\Z[\eta]\right]$ depends only on $\eta$
so that it will disappear in the $O_\eta$-notation of our complexity estimates.
Therefore, we do not detail further this subtlety and assume for simplicity
that the $a_i$'s are integers, which we wish to bound by $O_\eta(\sqrt{q})$. 

Let us first express the quantities $\pi^i+(\pi^{\vee})^i$ in terms of powers of
$\psi$ as a first step towards expressing the $\sigma_i$'s as functions of the
$a_i$'s.

\begin{lemma}\label{lem:psipi}
  For any $i\in\{1,\ldots,g\}$, there exist integers
  $(\alpha_{i,j})_{0\le j< i}$ such that $\alpha_{i,j}$ is in $O(q^{(i-j)/2})$ and
  \[\pi^i+(\pi^{\vee})^i=\psi^i+\sum_{j=0}^{i-1}\alpha_{i,j}\psi^j.\]
\end{lemma}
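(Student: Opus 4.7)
My plan is a simple induction on $i$, using that both $\pi$ and $\pi^\vee$ are roots of the same quadratic $X^2 - \psi X + q$, because $\pi + \pi^\vee = \psi$ and $\pi\pi^\vee = q$. Writing $s_i := \pi^i + (\pi^\vee)^i$, the first step is to derive the Newton-type recurrence
\[ s_i = \psi\, s_{i-1} - q\, s_{i-2} \]
by multiplying the identities $\pi^2 = \psi\pi - q$ and $(\pi^\vee)^2 = \psi\pi^\vee - q$ by $\pi^{i-2}$ and $(\pi^\vee)^{i-2}$ respectively and adding. The base cases $s_0 = 2$ and $s_1 = \psi$ are immediate.

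Then I would prove the claim inductively by substituting the expressions for $s_{i-1}$ and $s_{i-2}$ into the recurrence and collecting powers of $\psi$. Adopting the conventions $\alpha_{k,k} = 1$ and treating out-of-range indices as zero, this yields the explicit recurrence
\[ \alpha_{i,j} = \alpha_{i-1,j-1} - q\, \alpha_{i-2,j}, \]
from which integrality of every $\alpha_{i,j}$ is obvious.

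The only step that requires genuine thought is the size estimate $\alpha_{i,j} \in O(q^{(i-j)/2})$, but this also falls out of the same induction: the first term contributes $O(q^{((i-1)-(j-1))/2}) = O(q^{(i-j)/2})$, while $q\, \alpha_{i-2,j}$ contributes $q \cdot O(q^{((i-2)-j)/2}) = O(q^{(i-j)/2})$, so both terms have the required order. Conceptually, this is the natural bound, since the $\alpha_{i,j}$ are coefficients of a polynomial satisfied by two algebraic integers $\pi, \pi^\vee$ of archimedean size $\sqrt{q}$. The main obstacle is really only bookkeeping — being careful with the conventions at the endpoints $j = 0$ and $j = i-1$ — rather than anything substantive.
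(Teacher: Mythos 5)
Your approach is essentially the paper's, repackaged: the paper expands $\psi^{i+1}=(\pi+\pi^\vee)\psi^i$ and uses $\pi\pi^\vee=q$ to produce the cross term, whereas you use the Newton-type recurrence $s_i=\psi\,s_{i-1}-q\,s_{i-2}$; both reduce to the same coefficient recurrence $\alpha_{i,j}=\alpha_{i-1,j-1}-q\,\alpha_{i-2,j}$ (the paper simply splits the $j=0$ and $j=i$ endpoints into separate cases instead of using your out-of-range-equals-zero convention), and the term-by-term size estimate is identical.

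One bookkeeping slip worth fixing: your convention $\alpha_{k,k}=1$ fails at $k=0$. Since $s_0=\pi^0+(\pi^\vee)^0=2$, you need $\alpha_{0,0}=2$, not $1$. As written, your recurrence would give $\alpha_{2,0}=\alpha_{1,-1}-q\,\alpha_{0,0}=-q$, whereas $\pi^2+(\pi^\vee)^2=\psi^2-2q$ forces $\alpha_{2,0}=-2q$. Either set $\alpha_{0,0}=2$, or (as the paper does) verify $i=1$ and $i=2$ as explicit base cases, giving $\alpha_{1,0}=0$, $\alpha_{2,0}=-2q$, $\alpha_{2,1}=0$, and only invoke the coefficient recurrence for $i\ge 3$, so that $\alpha_{0,*}$ is never accessed. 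With that fix, the integrality and the bound $\alpha_{i,j}\in O(q^{(i-j)/2})$ go through exactly as you outline.
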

\begin{proof}
  The statement holds for $i=1$ with $\alpha_{1,0}=0$ by the definition of $\psi$.
  For $i=2$, we have $\psi^2=\pi^2+(\pi^{\vee})^2+2\pi\pi^{\vee}$, so that we
  have the result with $\alpha_{2,0}=-2q$ and $\alpha_{2,1}=0$. 
  
  In this proof, we set the convention $\alpha_{i,i}=1$ to simplify our
  recurrence relations. Let us now assume the lemma holds for any positive
  integer no greater than a certain $i$. We therefore have
\[\psi^{i+1}=(\pi+\pi^{\vee})\psi^i=(\pi+\pi^{\vee})\left[(\pi^i+(\pi^{\vee})^i)
-\sum_{j=0}^{i-1}\alpha_{i,j}\psi^j\right].\]
The first term is equal to
$\pi^{i+1}+(\pi^{\vee})^{i+1}+q(\pi^{i-1}+(\pi^{\vee})^{i-1})$ so that we can
use the lemma once again for $i-1$ and get
\[\psi^{i+1}=\pi^{i+1}+(\pi^{\vee})^{i+1}-\alpha_{i,i-1}\psi^i+q\alpha_{i-1,0}
+\sum_{j=1}^{i-1}(q\alpha_{i-1,j}-\alpha_{i,j-1})\psi^j.\]
Thus, we have computed the $\alpha_{i+1,j}$ given by
\[\alpha_{i+1,j}=\left\lbrace\begin{aligned}
     \alpha_{i,i-1}\qquad \quad\text{if $j=i$},&\\
     -q\alpha_{i-1,0}\qquad\quad\text{if $j=0$},& \\
      \alpha_{i,j-1}-q\alpha_{i-1,j} \quad \text{else.} &
\end{aligned}\right.\]
Let us now study the order of magnitude of the $\alpha_{i+1,j}$: from the
recurrence hypothesis on both $i$ and $i-1$, $\alpha_{i,i-1}=\alpha_{i+1,i}$ is in
$O(\sqrt{q})$, $\alpha_{i-1,0}$ is in $O(q^{(i-1)/2})$ so that
$\alpha_{i+1,0}$ is in $O(q^{(i+1)/2})$, and both $q\alpha_{i-1,j}$ and
$\alpha_{i,j-1}$ are in $O(q^{(i+1-j)/2})$,
which proves the result for any other $\alpha_{i+1,j}$. By induction, the lemma
is proven.
\end{proof}

Note that our $O$-notation in the previous statement and proof can be a bit
misleading as there may not be an absolute constant bounding all the
$\alpha_{i,j}/q^{(i-j)/2}$. However, from the recurrence relation between the
$a_{i,j}$'s, one sees that each $\alpha_{i,j}$ is equal to $q^{(i-j)/2}$ plus an
error term that is in $O_\eta(q^{(i-j-1)/2})$ and at worst quadratic in $g$, hence
the error term is negligible compared to $q^{(i-j)/2}$.

\begin{proposition}\label{prop:chipsi}
 The polynomial $\chi_\pi$ is uniquely determined by the coefficients $a_i$'s
 of $\psi$ in the basis $(1,\eta,\ldots,\eta^{g-1})$, and there exists
 $C_\eta>0$ depending only on $g$ and $\eta$ such that for any
 $i\in\{0,\ldots,g-1\}$, we have $\vert a_i\vert\le C_\eta\sqrt{q}$.
\end{proposition}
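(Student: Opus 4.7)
The plan is to exploit the identification of the Galois conjugates of $\psi \in \Q(\eta)$ with the eigenvalues of $\psi$ acting on the Tate module. Since $\psi \in \Q(\eta) \subset \End(J)\otimes\Q$ and the real-multiplication structure makes the rational Tate module $V_\ell J$ a free module of rank $2$ over $\Q(\eta)\otimes \Q_\ell$, the eigenvalues of $\psi = \pi+\pi^\vee$ on $V_\ell J$ are precisely the values $\pi_j+\bar\pi_j$ for $j=1,\dots,g$, each with multiplicity $2$, where $\pi_1,\bar\pi_1,\dots,\pi_g,\bar\pi_g$ are the roots of $\chi_\pi$ paired so that $\pi_j\bar\pi_j=q$. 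The rational polynomial $\prod_{j=1}^g(X-(\pi_j+\bar\pi_j))$ then annihilates $\psi$ and has degree $g$, so its roots are precisely the images $\sigma_j(\psi)$ of $\psi$ under the $g$ real embeddings $\sigma_1,\dots,\sigma_g:\Q(\eta)\hookrightarrow\R$, up to reordering.

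For the uniqueness assertion, the $a_i$'s determine $\psi$ and hence all its conjugates via $\sigma_j(\psi) = \sum_{i=0}^{g-1} a_i \sigma_j(\eta)^i$. Combined with the Weil relation $\pi_j\bar\pi_j = q$, each pair $\{\pi_j,\bar\pi_j\}$ is recovered as the roots of $X^2 - \sigma_j(\psi)X + q$, giving
\[
  \chi_\pi(X) = \prod_{j=1}^g \bigl(X^2 - \sigma_j(\psi)\,X + q\bigr).
\]
Equivalently, substituting Lemma~\ref{lem:psipi} into the Cayley--Hamilton relation $\sum_{i=0}^g \sigma_{g-i}(\pi^i+(\pi^\vee)^i)=0$ of Section~\ref{sec:psipi} produces a monic degree-$g$ polynomial relation in $\psi$ whose coefficients depend linearly on $(\sigma_0,\dots,\sigma_g)$ via an upper-triangular matrix with unit diagonal (built from the $\alpha_{i,j}$'s); identifying this relation with the polynomial above yields an explicit invertible system recovering the $\sigma_i$'s from the $a_i$'s.

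For the bound, totally-realness of $\Q(\eta)$ implies $\sigma_j(\psi)\in\R$; since $|\pi_j|=\sqrt q$ by Weil and $\bar\pi_j$ is then the complex conjugate of $\pi_j$, we get $|\sigma_j(\psi)|=|\pi_j+\bar\pi_j|\le 2\sqrt q$. Writing the relation $\sigma_j(\psi) = \sum_{i=0}^{g-1} a_i \sigma_j(\eta)^i$ in matrix form, the column vector $(a_0,\dots,a_{g-1})^t$ equals $V^{-1}$ applied to $(\sigma_1(\psi),\dots,\sigma_g(\psi))^t$, where $V=(\sigma_j(\eta)^i)_{j,i}$ is a Vandermonde matrix on the $g$ distinct conjugates of $\eta$ (hence invertible), and $V^{-1}$ depends only on $\eta$. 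We conclude $|a_i|\le \|V^{-1}\|_\infty\cdot 2\sqrt q =: C_\eta\sqrt q$. The main difficulty is the first step: relating the abstract algebraic element $\psi\in\Q(\eta)$ to its eigenvalues on $V_\ell J$ via the RM action; once in hand, the rest reduces to elementary linear algebra (Vandermonde inversion) and the Weil bound.
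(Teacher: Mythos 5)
Your proof is correct, but it takes a genuinely different route from the paper. You identify the Galois conjugates $\sigma_j(\psi)$ with the eigenvalues $\pi_j+\bar\pi_j$ of $\psi$ acting on the Tate module, so that the Weil bound $|\pi_j|=\sqrt q$ immediately yields the sharp estimate $|\sigma_j(\psi)|\le 2\sqrt q$, and the uniqueness claim follows cleanly from the explicit factorization $\chi_\pi(X)=\prod_{j=1}^g\bigl(X^2-\sigma_j(\psi)X+q\bigr)$. The paper instead stays entirely at the level of polynomial identities: it uses Lemma~\ref{lem:psipi} and Cayley--Hamilton to construct the degree-$g$ polynomial $\chi_\psi$ annihilating $\psi$, propagates the Weil bounds on the coefficients $\sigma_i$ of $\chi_\pi$ to the coefficients $s_j$ of $\chi_\psi$, and then bounds the roots of $\chi_\psi$ via Fujiwara's inequality; uniqueness is derived from the triangular linear system relating the $s_j$'s to the $\sigma_i$'s. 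Your approach is more conceptual and gives the cleanest possible constant ($2$ rather than whatever Fujiwara produces), and it makes the geometric meaning of $\psi$ transparent; its cost is that it invokes the structure of the rational Tate module over $\Q(\eta)\otimes\Q_\ell$ as a rank-$2$ free module, which you rightly flag as the nontrivial input. The paper's argument is more elementary and self-contained, and the explicit recursion for the $\alpha_{i,j}$'s in Lemma~\ref{lem:psipi} serves a secondary purpose as the effective dictionary between $\chi_\psi$ and $\chi_\pi$ that is used to reconstruct $\chi_\pi$ at the end of the algorithm. Both arguments feed into the same Vandermonde inversion step, which depends only on $\eta$.
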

\begin{proof}
Recall that $\sigma_i$ is the coefficient $c_{2g-i}$ of $\chi_\pi$ for $i\ne g$, and that $\sigma_g=c_g/2$. Using Lemma~\ref{lem:psipi} for any $i\in\{1,\ldots,g\}$
and setting $\alpha_{i,i}=1$, we have
\[\sum_{i=0}^g\sigma_{g-i}\sum_{j=0}^{i}\alpha_{i,j}\psi^j=
\sum_{j=0}^g\psi^j\sum_{i=j}^g\alpha_{i,j}\sigma_{g-i}=0.\] Let us
define $s_j=\sum_{i=j}^g\alpha_{i,j}\sigma_{g-i}$ and
$\chi_{\psi}(X)=X^g+s_{g-1}X^{g-1}+\cdots+s_0$. We previously showed that for any $i$ the coefficient $\sigma_i$ is in $O(q^{i/2})$, therefore each $\sigma_{g-i}$ is in $O(q^{(g-i)/2})$ and by Lemma~\ref{lem:psipi} we know that $\alpha_{i,j}$ is in $O(q^{(i-j)/2})$, so  that each $s_j$ is in $O(q^{(g-j)/2})$.  

Note that $\chi_{\psi}$ is a degree-$g$ monic polynomial vanishing on $\psi$,
and it is therefore its characteristic polynomial. Let us denote by $\psi_k$
the $g$ conjugates of $\psi$, which are the $g$ real roots of $\chi_\psi$. By
the Fujiwara bounds from~\cite{fujiwara1916}, for any $k\in\{1,\ldots,g\}$ we
have \[ |\psi_k| \le 2\max_{0\le k \le g} \left( \vert
s_{g-k}\vert^{1/k}\right).\] We already know that $\vert s_{g-k}\vert=
O(\sqrt{q}^k)$, so we deduce that the $\vert\psi_k\vert$ are in $O(\sqrt{q})$.

The conjugates $\psi_k$ can be expressed explictly: by definition we
have $\psi=\sum_{i=0}^{g-1} a_i\eta^i$, so we can write
$\psi_k=\sum_{i=0}^{g-1} a_i\eta_k^i$ by choosing a convenient order for the
$g$ conjugates of $\eta$ (possibly in the Galois-closure of $\Q(\eta)$). We
can view this operation as a change of variables from $a_i$'s to $\psi_k$'s and
the matrix associated to this linear transformation is the Vandermonde matrix
of the conjugates $\eta_k$'s. This matrix is invertible because $\eta$ is
separable so that the $\eta_i$ are all distinct reals. Then, inverting the
linear change of variable, we prove that the $a_i$ are also in
$O_\eta(\sqrt{q})$ since the matrix norm of the inverse of the Vandermonde
matrix only depends on $\eta$. This proves the bound given in the proposition
and gives a bijection between $a_i$'s and $\psi_k$'s. We must now justify that
there is a one-to-one correspondance between the $a_i$'s and the coefficients
of $\chi_\pi$.

Since the $\psi_k$ are exactly the real roots (possibly in the Galois-closure
of $\Q(\eta)$) of $\chi_{\psi}$, by Vieta's formula they satisfy the $g$
equations \[s_{g-i}=(-1)^iS_i(\psi_1,\ldots,\psi_g) \text{ for $1\le i \le
g$,}\] where the $S_i$'s are the elementary symmetric polynomials in $g$
variables. Thus, once the $a_i$'s are known, the values for $\psi$ and its
conjugates are known and a unique value for each $s_i$ is deduced. Furthermore,
the expressions of the $s_i$'s in terms of the $\sigma_i$'s form a linear
triangular system whose determinant equals $1$, so that there is an efficiently
computable one-to-one correspondence between $\chi_{\psi}$ and $\chi_{\pi}$.
Therefore, $\chi_\pi$ is uniquely determined by the $a_i$'s.
\end{proof}

Our algorithm is based on determining the $a_i$'s modulo $\ell$ for sufficiently
many $\ell$ until they are known without ambiguity and we can deduce $\chi_\pi$.
While the Weil bounds on the $\sigma_i$'s are enough for our purpose, we have
proven that the $a_i$'s are in $O_\eta(\sqrt{q})$ as in genus 2 and
3~\cite{gaudry2011counting,AGS18}. The next section details the process of recovering such
modular information on the $a_i$'s.

\subsection{Overview of our algorithm}\label{sec:algormanyg}
The general RM point counting algorithm is Algorithm~\ref{algo:rmanyg}.  As
mentioned above, we want to compute the coefficients $a_0,\ldots, a_{g-1}$ of
the endomorphism $\psi$. More precisely, we compute their values modulo
sufficiently many totally-split primes $\ell$ until we can deduce their values
from the bounds of Prop~\ref{prop:chipsi} and the Chinese Remainder Theorem.
Then, the coefficients of $\chi_\pi$ are deduced from the $a_i$'s.

Apart from being totally split, we actually require that the primes $\ell$ we consider satisfy the additional conditions (C1) to (C4) below. The reasons why we need these technical conditions will be made clearer in this section.
\begin{enumerate}
  \item[(C1)] $\ell$ must be different from the characteristic of the base field;
  \item[(C2)] $\ell$ must be coprime to the discriminant of the minimal
    polynomial of $\eta$;
  \item[(C3)] there must exist $\alpha_i\in\mathfrak p_i$ as in
    Lemma~\ref{lem:smallgenanyg} below with norm non-divisible by $\ell^3$ for
    $i\in\{1,\ldots g\}$;
  \item[(C4)] the ideal $\ell\Z[\eta]$ must split completely.
\end{enumerate}

Condition (C2) means that $\Z[\eta]$ is locally maximal at $\ell$, this implies
that even if $\Z[\eta]$ is not the maximal order of $\Q(\eta)$, this defect will
have no impact on the factorization of $\ell$ as a product of prime ideals,
which is at the heart of our complexity gain. Condition (C3) implies that there
is a unique subgroup of order $\ell^2$ in $J[\alpha_i]$.

\begin{algorithm}[ht] \label{algo:rmanyg}
  \Input{$q$ an odd prime power, and $f\in\mathbb F_q[X]$ a monic squarefree
  polynomial of degree $2g+1$ such that the hyperelliptic curve $Y^2=f(X)$ has
explicit RM by $\Z[\eta]$.}
  \Output{The characteristic polynomial $\chi_\pi\in\mathbb Z[T]$ of the
    Frobenius endomorphism on the Jacobian $J$ of the curve.}
 
  $w\gets 1$\;
  Define $C_g$ as in Prop.~\ref{prop:chipsi}\;
  \While{$w\leq 2\, \left[\mathcal{O}_{\Q(\eta)}:\Z[\eta]\right] C_\eta\,\sqrt{q} + 1$} {
    Pick the next prime $\ell$ that satisfies conditions (C1) to (C4);

    Compute the ideal decomposition
    $\ell\, \Z[\eta]=\gothp_1\cdots\gothp_g$, corresponding to the
    eigenvalues $\mu_1,\ldots,\mu_g$ of $\eta$ in
    $J[\ell]$ \;

    \For{$i\gets1$ \KwTo $g$}{
      Compute a small element $\alpha_i$ of $\gothp_i$ as in
      Lemma~\ref{lem:smallgenanyg}\;

      Compute a non-zero element $D_i$ of order $\ell$ in $J[\alpha_i]$ \;

      Find the unique $k_i\in\Z/\ell\Z$ such that
      $k_i\pi(D_i) = \pi^2(D_i) + qD_i$ \;
    }

    Find the unique tuple $(a_0,\ldots,a_{g-1})$ in $(\Z/\ell\Z)^g$ such that
    $ \sum_{j=0}^{g-1}a_j\mu_i^j = k_i$, for $i$ in $\{1,\ldots,g\}$ \;

    $w \gets w\cdot\ell$\;
  }
  Reconstruct $(a_0,\ldots,a_{g-1})$ using the Chinese Remainder Theorem \;

    Deduce $\chi_{\pi}$ from $\psi$.

    \caption{Overview of our RM point-counting algorithm}
\end{algorithm}
The first 3 conditions eliminate only a finite number of $\ell$'s that depends
only on $\eta$. This is not immediate for Condition (C3), so we give
further details. Let us consider an element $\alpha_i$ as in
Lemma~\ref{lem:smallgenanyg} below, i.e. an element represented as a degree
$<g$ polynomial in $\eta$ with integer coefficients in $O_\eta(\ell^{1/g})$. Then
from this size constraint, its norm in the CM field is of the form
$c\,\ell^2$, with $c$ only depending on $\eta$. Therefore, for $\ell >
c$, it is impossible that this norm could be divisible by $\ell^3$. Condition (C3) will thus only discard primes smaller than $c$, which are in finite number.

Condition (C4) will eliminate a finite proportion of primes $\ell$
depending only on $\eta$ which we need to detail in order to bound the size of
the largest $\ell$ that our algorithm will have to consider. Given a genus-$g$
curve $\C$ with RM by $\Z[\eta]$, by Chebotarev's density theorem, the
proportion of primes $\ell$ satisfying the last condition is at least
$1/\#\Gal(\Q(\eta)/\Q)$, which is bounded below by $1/g!$. To count points on
$\C$, we need to find $L$ a set of primes satisfying all the above conditions
and such that $\prod_{\ell\in L}\ell >
2\left[\mathcal{O}_{\Q(\eta)}:\Z[\eta]\right] C_\eta\sqrt{q}$. By the prime
number theorem, both the number and size of the primes contained in $L$ are in
$O(g!\log (C_\eta q))$. In some particular cases, the proportion of \enquote{nice}
primes may be much larger: for instance when the RM field is the totally real
subfield of a cyclotomic field. In the field $\Q(\zeta_n+\zeta_n^{-1})$, a
prime $\ell$ totally splits if and only if $\ell\equiv \pm 1 \bmod n$, and
therefore condition (C4) is satisfied by a proportion of primes equal to
$2/(n-1)=1/g$. In that case, the number and size of primes in the set $L$ can
be reduced to $O(g\log (C_\eta q))$.

We now explain how the algorithm works for a given split $\ell$. First its
decomposition as a product of prime ideals $\ell\,
\Z[\eta]=\gothp_1\cdots\gothp_g$ is computed, and for each prime ideal
$\gothp_i$, a non-zero element $\alpha_i$ in $\gothp_i$ is found with a small
representation as in Lemma~\ref{lem:smallgenanyg} below. In fact, $\gothp_i$ is
not necessarily principal and $\alpha_i$ need not generate $\gothp_i$. The
kernel of $\alpha_i$ is denoted by $J[\alpha_i]$ and it contains a subgroup
isomorphic to $\Z/\ell\Z\times\Z/\ell\Z$, since the norm of $\alpha_i$ is a
multiple of $\ell$. 

Since $\ell$ satisfies Condition (C2), there is a correspondence between the
prime ideals $\gothp_i$ in the decomposition of $\ell$ in $\Z[\eta]$ and
irreducible factors of the minimal polynomial of $\eta$ modulo $\ell$. On one
side we have $g$ prime ideals coprime to each other, so we have $g$ coprime
factors $X-\lambda_i$ on the other side. 

This way, we know that the two-element representation of each ideal $\gothp_i$
is of the form $(\ell,\eta-\lambda_i)$, which means that $\lambda_i$ is an
eigenvalue of $\eta$ viewed as an endomorphism of $J[\ell]\cong(\mathbb
Z/\ell\mathbb Z)^{2g}$. Furthermore, the $\gothp_i$'s are coprime so we have
$J[\ell]=\bigoplus_{i=1}^g J[\gothp_i]$ and since each of them has norm $\ell$
in $\Q(\eta)$, they have norm $\ell^2$ in the CM field and therefore each
$J[\gothp_i]$ is isomorphic (as a group) to $\left(\Z/\ell\Z\right)^2$. Since
$\ell$ satisfies Condition (C3) we know that it is actually the only subgroup
of $J[\alpha_i]$ isomorphic to $\left(\Z/\ell\Z\right)^2$.

On $J[\gothp_i]\subset J[\alpha_i]$, the endomorphism $\eta$ acts as the
multiplication by $\lambda_i$. Therefore, the endomorphism
$\psi=\sum_{i=0}^{g-1} a_i\eta^i$ also acts as a scalar multiplication on this
2-dimensional space, and we write $k_i\in\Z/\ell\Z$ the corresponding
eigenvalue: for any $D_i$ in $J[\gothp_i]$, we have $\psi(D_i) = k_iD_i$. On
the other hand, from the definition of $\psi$, it follows that $\psi\pi =
\pi^2+q$. Therefore, if such a $D_i$ is known, we can test which value of $k_i
\in \Z/\ell\Z$ satisfies
\begin{equation}\label{eq:pirm}
 k_i\pi(D_i) = \pi^2(D_i) + qD_i.
\end{equation}

Since $\ell$ is a prime and $D_i$ is of order exactly $\ell$, this is also the
case for $\pi(D_i)$. Finding $k_i$ can then be seen as a discrete logarithm
problem in the subgroup of order $\ell$ generated by $\pi(D_i)$; hence the
solution is unique. Equating the two expressions for $\psi$, we get explicit
relations between the $a_j$'s modulo $\ell$: $$ \sum_{j=0}^{g-1}a_j\lambda_i^j
\equiv k_i \bmod \ell.$$ Therefore we have a linear system of $g$ equations in
$g$ unknowns, the determinant of which is the Vandermonde determinant of the
$\lambda_i$, which are distinct since the ideals $\gothp_i$ are coprime.
Hence the system can be solved and it has a unique solution modulo $\ell$.

It remains to show how to construct a divisor $D_i$ in $J[\gothp_i]$, i.e. an element of
order $\ell$ in the kernel $J[\alpha_i]$.  Since an explicit expression of
$\eta$ as an endomorphism of the Jacobian of $\C$ is known, an explicit
expression can be deduced for $\alpha_i$, using the explicit group law. The
coordinates of the elements of this kernel are solutions of a polynomial system
that can be directly derived from this expression of $\alpha_i$, using a
modelling similar to that of~\cite{AGS17}. Likewise, we use the
geometric resolution algorithm to find the solutions of this system, perhaps in a
finite extension of the base field, from which divisors in $J[\alpha_i]$ can be
constructed.  Multiplying by the appropriate cofactor, we can reach all the
elements of $J[\gothp_i]$; but we stop as soon as we get a non-trivial one.

\begin{lemma}\label{lem:smallgenanyg}
    For any prime $\ell$ that splits completely in $\Z[\eta]$, each prime ideal
    $\gothp$ above $\ell$ contains a non-zero element $\alpha$ of the form
    $\alpha = \sum_{i=0}^{g-1}\alpha_i\eta^i$, where the $|\alpha_i|$  are
    integers smaller than $i_\eta^{-1/g}\ell^{1/g}$ and $i_\eta$ stands for the index $\left[\OO_{\Q(\eta)}:\Z[\eta]\right]$. 
\end{lemma}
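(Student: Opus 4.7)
The plan is to find $\alpha$ via a pigeonhole / Minkowski lattice-point argument applied to $\gothp$, viewed as a $\Z$-sublattice of $\Z[\eta]$ in the basis $1, \eta, \dots, \eta^{g-1}$.

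First I would establish that $\gothp$ has index exactly $\ell$ in $\Z[\eta]$. Because $\ell$ splits completely one has $\ell\,\Z[\eta] = \gothp_1 \cdots \gothp_g$ with $g$ distinct maximal ideals, and the identity $\ell^g = |\Z[\eta]/\ell\,\Z[\eta]| = \prod_i |\Z[\eta]/\gothp_i|$ together with $|\Z[\eta]/\gothp_i| \geq \ell$ forces each residue field to be exactly $\F_\ell$. Consequently, in the $(\eta^i)$-coordinates, $\gothp$ is a full-rank sublattice of $\Z^g$ of covolume $\ell$.

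Next I would run the pigeonhole: for an integer $N$, the $(N+1)^g$ integer combinations $\sum_i a_i \eta^i$ with $a_i \in \{0,\dots,N\}$ land in $\Z[\eta]/\gothp \cong \F_\ell$; as soon as $(N+1)^g > \ell$, two of them coincide modulo $\gothp$, and subtracting yields a non-zero element of $\gothp$ whose coefficients in $(\eta^i)$ are bounded by $N$ in absolute value. Already the crude choice $N = \lceil \ell^{1/g}\rceil$ produces a weak form of the lemma.

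The main obstacle is obtaining the sharper constant $i_\eta^{-1/g}$, which the naive pigeonhole on $\Z[\eta]$ alone does not detect. I would extract it by passing to the maximal order: in $(\eta^i)$-coordinates, $\Z[\eta]$ sits inside $\OO_{\Q(\eta)}$ with index $i_\eta$, so $\OO_{\Q(\eta)}$ has covolume $1/i_\eta$ and the lattice $\gothp\,\OO_{\Q(\eta)}$ has covolume $\ell/i_\eta$. Applying Minkowski's first theorem to the symmetric cube of side $2\,i_\eta^{-1/g}\ell^{1/g}$ (volume $2^g\,\ell/i_\eta$) produces a non-zero element in $\gothp\,\OO_{\Q(\eta)}$ within the claimed box. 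The delicate final step, which I expect to be the hardest, is to guarantee that this element can be taken in $\Z[\eta]$ itself (and hence in $\gothp$); I would handle it by applying the pigeonhole directly on the $\frac{1}{i_\eta}\Z$-grid inside $\OO_{\Q(\eta)}$ and then selecting, among the $i_\eta$ cosets of $\Z[\eta]$ in $\OO_{\Q(\eta)}$, a collision whose difference lies in $\Z[\eta]\cap\gothp\,\OO_{\Q(\eta)} = \gothp$.
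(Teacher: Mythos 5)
Your pigeonhole argument in the first half is a clean, fully elementary proof of the existence of a non-zero $\alpha\in\gothp$ with all $|\alpha_i|<\ell^{1/g}$: your index computation $[\Z[\eta]:\gothp]=\ell$ is correct, and once $(N+1)^g>\ell$ two of the $(N+1)^g$ tuples with $\alpha_i\in\{0,\dots,N\}$ collide modulo $\gothp$; taking $N=\lceil\ell^{1/g}\rceil-1$ gives coefficients strictly below $\ell^{1/g}$ since $\ell^{1/g}$ is never an integer for a prime $\ell$ and $g\geq 2$. This replaces the paper's Minkowski argument by a pigeonhole that avoids any lattice covolume computation, at the price of not obtaining the extra factor $i_\eta^{-1/g}$.

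The second half, where you try to recover $i_\eta^{-1/g}$ by applying Minkowski to $\gothp\OO_{\Q(\eta)}$, has a genuine gap which you yourself flag: a short vector in $\gothp\OO_{\Q(\eta)}$ need not lie in $\Z[\eta]$, and your proposed repair does not go through. In $(\eta^i)$-coordinates, $\OO_{\Q(\eta)}$ is a lattice of covolume $1/i_\eta$, not a rescaled integer grid ``$\tfrac{1}{i_\eta}\Z$''; and neither Minkowski nor a pigeonhole lets you force the collision difference into the prescribed coset $\Z[\eta]$ of $\OO_{\Q(\eta)}$ without paying back the factor $i_\eta^{1/g}$ you were trying to gain.

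You should not, however, view this as a defect relative to the paper. The paper's derivation of the $i_\eta^{-1/g}$ factor applies the formula $\mathrm{vol}(\sigma(\gothp))=N(\gothp)\sqrt{|\Delta|}$, which is stated for ideals of the maximal order, to the $\Z[\eta]$-ideal $\gothp$; for such an ideal the correct value is $\mathrm{vol}(\sigma(\gothp))=[\Z[\eta]:\gothp]\sqrt{|\Disc(\Z[\eta])|}=\ell\, i_\eta\sqrt{|\Delta|}$, whence $\mathrm{vol}(\tau(\gothp))=\ell$, exactly as it must be since $\tau(\gothp)$ is a sublattice of $\Z^g$ of index $\ell$. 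Minkowski then gives the bound $\ell^{1/g}$, i.e.\ constant $1$, which is precisely what your pigeonhole already delivers. Since only the order of magnitude $O_\eta(\ell^{1/g})$ is used downstream, the paper's complexity analysis is unaffected, but the sharper constant in the statement appears to be unsupported and you were right to find it out of reach.
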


\begin{proof}
 
  In this proof, we will consider two different embeddings from $\gothp$ to
  $\R^g$. First, the elements of the ideal $\gothp$ are inside $\Z[\eta]$ so we
  can define the embedding $\tau:\sum_{j=0}^{g-1}a_{j}\eta^j \mapsto
  (a_0,\ldots a_{g-1})$. Let $(\beta_1,\ldots,\beta_g)$ a basis of $\gothp$ as
  a $\Z$-module and define $B$ the $g\times g$ matrix obtained by concatenating
  the $g$ column vectors $\tau(\beta_i)$. The volume of the lattice
  $\tau(\gothp)$ is by definition $|\det B|$. 
  
  To compute this volume, we introduce another embedding from $\gothp$ to a
  lattice of $\R^g$ whose volume is already known. Let us introduce the $g$
  real embeddings $\sigma_i:\Q(\eta)\rightarrow \R$ and define the embedding
  $\sigma:\Q(\eta)\rightarrow \R^g$ by $\sigma:\alpha\mapsto
  (\sigma_1(\alpha),\ldots,\sigma_g(\alpha))$.  By~\cite[Prop~4.26]{milneant},
  $\sigma(\gothp)$ is a full lattice in $\R^g$ and its volume is
  $N(\gothp)\sqrt{|\Delta|}$, where $\Delta$ is the discriminant of
  $\Q(\eta)$.  In our case, $N(\gothp)=\ell$ so the volume of $\sigma(\gothp)$
  is $\ell\sqrt{|\Delta|}$.

  Now, we link both volumes by remarking that we can reorder the embeddings to
  make them compatible with our previous definition of the conjugates $\eta_k$
  of $\eta$ so that for any $i,k\in\{1,\ldots,g\}^2$ we have
  $\sigma_k(\beta_i)=\sum_{j=0}^{g-1}b_{ij}\eta_k^j$, where the $b_{ij}$
  satisfy $\beta_i=\sum_{j=0}b_{ij}\eta^j$. Phrased differently, with $V$ the
  Vandermonde matrix of the conjugates of $\eta$, this amounts to
  $\sigma_k(\beta_i)$ being the $k$-th coordinate of the vector
  $V\tau(\beta_i)$. If we call $S$ the matrix whose entries are the
  $\sigma_{j}(\beta_i)$ then we have $S=VB$.  We know that $|\det S|$ is the
  volume of $\sigma(\gothp)$ which we previously computed so we deduce $|\det
  B|=\ell\sqrt{|\Delta|}/|\det V|$.

  Finally, we give a significance to the quotient $\sqrt{|\Delta|}/|\det V|$.
  Defining $i_\eta$ as in the statement of the lemma, we have
  $\Disc(1,\eta,\ldots,\eta^{g-1})=i_\eta^2\Disc(\OO_{\Q(\eta)}/\Z)$ (see for
  instance~\cite[Remark~2.25]{milneant}) but since
  $\Disc(1,\eta,\ldots,\eta^{g-1})=(\det V)^2$ we finally conclude that the
  volume of $\tau(\gothp)$ is $\ell /i_\eta$.
 
 Let us consider $C=\{x\in\mathbb{R}^{g}\mid ||x||_{\infty}\le
 i_\eta^{-1/g}\ell^{1/g}\}$.  The volume of the convex $C$ is $2^g\ell/i_\eta$.
 Since $g$ is the dimension of $\tau(\gothp)$ and $\ell /i_\eta$ is its volume,
 Minkowski's theorem guarantees the existence of a non-zero element $v$ of
 $\tau(\gothp)$ belonging to $C$. By definition, $v=\sum_{i=0}^{g-1}v_i\eta^i$
 is an element of $\gothp$ whose coordinates $v_i$'s are integers of absolute
 values bounded by $i_\eta^{-1/g}\ell^{1/g}$, which concludes the proof.
  \end{proof}

  Since we know it exists, given one of the ideals $\gothp_i$, we can find
  $\alpha_i$ a small element of $\gothp_i$ as in Lemma~\ref{lem:smallgenanyg} by
  exhaustive search in at most $2^g\ell/i_\eta$ operations in $\Z[\eta]$. Note that
  there is an extensive litterature on finding short vectors in a lattice of
  dimension $d$, motivated for instance by cryptographic applications. An
  example is the quantum algorithm of~\cite{cdpr} which computes a
  $2^{\softO(\sqrt{d})}$-approximation of the shortest non-zero vector in time
  polynomial in $d$. Restricting to classical algorithms, the best option in
  general is the BKZ algorithm~\cite{bkz} that computes a
  $2^{\softO(d^{\alpha})}$-approximation in time $2^{\softO(d^{1-\alpha})}$,
  for any $\alpha\in [0,1]$. In our case however, the existence of a very short
  vector is already known and, more importantly, the factor $2^g$ due to the
  dimension is acceptable since it vanishes in the $O_\eta$-notation.

\section{Modelling kernels of endomorphisms}\label{sec:cplxrmanyg}

Let $\alpha$ be an explicit endomorphism of degree $O(\ell^2)$ on the Jacobian
of $\C$, which satisfies the properties of Lemma~\ref{lem:smallgenanyg}.  We
want to compute a polynomial system that describes the kernel $J[\alpha]$ of
$\alpha$, and then solve it. The resultant-based approach of~\cite{AGS18}
cannot be used as the degrees are squared each time we eliminate a variable,
causing an exponential dependency in $g$ in the exponent of $\ell$. Instead, we
use the modelling techniques from~\cite{AGS17}, where the endomorphism $\alpha$
replaces the multiplication by $\ell$. This time, the $g$ variables of large
degrees have degrees in $O_\eta(\ell^{3/g})$ instead of $O_\eta(\ell^3)$ so
that the final complexity bound for computing the kernel $\alpha$ is in
$O_\eta(\ell^D(\log q)^2)$ binary operations, with $D$ an absolute constant. 

The main change between this section and~\cite[Sec.~4 \& 5]{AGS17} is that the
$d_i$ and $e_i$ no longer denote $\ell$-division but $\alpha$-division
polynomials, and the polynomials $u_j$ and $v_j$ intervening in the Mumford
representation of the candidate kernel element are modified accordingly. The
structure of our modelling is very similar but require some adaptations at
various places, which is the reason why we repeat the analysis in the generic
case. In the non-generic case, we go over the main results
of~\cite[Sec.~5]{AGS17} and detail the parts requiring adjustments.

\subsection{The generic case}\label{sec:genanyg}

Let us first recall the definition of Cantor's $\ell$-division polynomials introduced in~\cite{Ca94}, the
coefficients of the polynomials
$\delta_{\ell}(X)$ and $\varepsilon_{\ell}(X)$ such that, for $(x,y)$ a generic
point of the curve and $\ell > g$, we have
\begin{equation*}
    \ell \left(
    (x,y)-P_{\infty}\right)=\left\langle\delta_{\ell}\left(\frac{x-X}{4y^2}
    \right),\varepsilon_{\ell}\left(\frac{x-X}{4y^2}
    \right)\right\rangle.
\end{equation*}

An important step towards our complexity bounds is to bound the degrees of these
polynomials, so that we can later on deduce degree-bounds for the polynomial
systems modelling the kernels $J[\alpha_i]$. To this end, we use the following result
proven in~\cite[Sec.~6]{AGS17}.

\begin{theorem}{\cite[Lemma 10]{AGS17}}\label{lem:deg_delta} 
For any integer $\ell >g$, the polynomial $\delta_{\ell}(X)$
of degree $g$ in $X$ has coefficients in $\F_q[x]$ whose degrees in $x$
are bounded by $g\ell^3/3+\Og(\ell^2)$; the polynomial
$\varepsilon_{\ell}(X)/y$ has coefficients in $\F_q(x)$ whose respective numerators and denominators have degrees bounded by
$2g\ell^3/3+\Og(\ell^2)$.  Furthermore, the roots of the denominators are
roots of the leading coefficient of $\delta_\ell(X)$. \end{theorem}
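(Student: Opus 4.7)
The approach is to proceed by induction on $\ell$ via Cantor's recurrence relations for the division polynomials, which are the hyperelliptic analogue of the classical elliptic double-and-add recurrence. Geometrically, the incremental step $\ell \to \ell+1$ expresses $\delta_{\ell+1}, \varepsilon_{\ell+1}$ via the Cantor composition-and-reduction of the Mumford representation of $\ell(P-P_\infty)$ with $(P-P_\infty)$, specialised so that the coefficients become polynomials in the coordinates $(x,y)$ of the variable point $P$.

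The first step is to track only the $x$-degrees of the coefficients of $\delta_\ell$. Let $d_\ell$ denote the maximum such degree. In the incremental step, one composes a weight-$g$ divisor with a weight-$1$ divisor and reduces the resulting weight-$(g+1)$ pair modulo the Mumford relation $v^2\equiv f \pmod u$, where $f$ has degree $2g+1$ in $x$. A careful bidegree analysis of this composition-and-reduction shows that the new $u$-polynomial is again of $X$-degree $g$, with an $x$-degree increment bounded by $d_{\ell+1}-d_\ell \le g\ell^2 + \Og(\ell)$; the factor $g\ell^2$ reflects that the $v$-part of the representation of $\ell(P-P_\infty)$ already has $x$-degree $\Og(\ell^2)$ and that there are $g$ coefficients to propagate. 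Summing these increments from $d_1 = 0$ yields $d_\ell \le g\sum_{k<\ell}k^2 + \Og(\ell^2) = g\ell^3/3 + \Og(\ell^2)$, which is exactly the first claim.

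For $\varepsilon_\ell(X)/y$, the Mumford relation $v^2\equiv f \pmod u$ applied to $\langle\delta_\ell,\varepsilon_\ell\rangle$ forces $\varepsilon_\ell^2 - f \in (\delta_\ell)$. Reducing $\varepsilon_\ell^2-f$ modulo $\delta_\ell$ as polynomials in $X$ over $\F_q(x)$, and combining with the bound on $d_\ell$, shows that the $x$-degrees of the numerator and the denominator of each coefficient of $\varepsilon_\ell/y$ are bounded by roughly $2d_\ell + \Og(\ell^2)$, giving the announced cubic bound with leading constant $2g/3$. The denominators themselves arise only from one place: to keep the intermediate $u$-polynomials monic in $X$ throughout the recurrence, one must divide by their leading coefficients in $\F_q[x]$, and these leading coefficients are precisely the leading coefficients of the $\delta_{\ell'}$ for $\ell'\le\ell$, which by the same recurrence divide the leading coefficient of $\delta_\ell$; this gives the final statement on the roots of the denominators.

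The main obstacle is controlling the leading cubic constant $g/3$ sharply rather than settling for a loose cubic bound. A naive estimate on the Cantor composition of a weight-$g$ divisor with a weight-$1$ divisor would simply multiply rather than add the relevant degrees, yielding an exponential-in-$\ell$ bound; the key is to exploit the fact that the reduction step brings the result back to weight $g$, so that only the new $u$- and $v$-polynomials need to be tracked, and that the reduction modulo $f$ introduces only an additive $\Og(\ell)$ term per increment. Once the bidegree bookkeeping in $(X,x)$ of the Cantor step is recorded precisely, the bounds follow by direct summation, with the $g$-dependent constants absorbed into the $\Og(\ell^2)$ error term.
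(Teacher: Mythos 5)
Your proposal takes a genuinely different route from the one used in \cite[Sec.~6]{AGS17}, and as written it has a real gap. The published proof does not iterate the Cantor group-law step $\ell(P-P_\infty)\mapsto(\ell+1)(P-P_\infty)$. Instead it uses Cantor's explicit algebraic recurrences for the auxiliary division polynomials $\psi_r$ (the genus-$g$ analogues of the elliptic $\psi_n$, satisfying determinantal identities of Kiepert type), proves cubic-in-$r$ degree bounds for those by direct induction on the recurrence, and then reads off the bounds for $\delta_\ell$ and $\varepsilon_\ell$, which Cantor expresses as ratios of Hankel determinants in the $\psi_r$. Those recurrences are polynomial identities in which all terms are already polynomials, so degree bookkeeping is a straight summation. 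The group-law composition you propose is a very different object.

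The concrete gap is the increment bound $d_{\ell+1}-d_\ell\le g\ell^2+O_g(\ell)$. You justify the $g\ell^2$ by asserting that ``the $v$-part of the representation of $\ell(P-P_\infty)$ already has $x$-degree $O_g(\ell^2)$,'' but that contradicts the statement being proved: the theorem says the coefficients of $\varepsilon_\ell/y$ have $x$-degree $\sim 2g\ell^3/3$, i.e.\ \emph{cubic}, not quadratic. Once you feed the correct cubic bound for the $v$-part back into the naive bookkeeping of the composition step, the increment is no longer $O_g(\ell^2)$. Worse, the Cantor addition of the generic point $(x,y)$ to a weight-$g$ divisor introduces the interpolation scalar $\lambda=(y-v_\ell(x))/u_\ell(x)$, so that $v=v_\ell+\lambda u_\ell$ and the numerator $f-v^2$ of the unreduced quotient carry denominators of $x$-degree on the order of $d_\ell$ (and numerators on the order of $d_\ell+e_\ell$). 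For the reduced $u_{\ell+1}$ to land back in $\F_q[x][X]$ with $x$-degree only $d_\ell+O_g(\ell^2)$, a cancellation of $x$-degree comparable to $d_\ell$ itself (hence cubic in $\ell$) must occur. Your sketch neither identifies nor exploits the mechanism for that cancellation; without it, the naive degree estimate coming out of composition-and-reduction is far worse than cubic. This is precisely the obstruction that Cantor's explicit $\psi_r$-formulas circumvent, which is why the published proof uses them rather than the group law.

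The final paragraph also overreaches: the claim that the leading coefficients of the $\delta_{\ell'}$ for $\ell'\le\ell$ divide the leading coefficient of $\delta_\ell$ has no basis (there is no such divisibility chain for elliptic division polynomials either), and the statement does not require it. It only asserts that the denominators of the coefficients of $\varepsilon_\ell/y$ divide a power of $\LC(\delta_\ell)$ for that single $\ell$, which comes from normalizing the Mumford $v$-coordinate modulo the monic rescaling of $\delta_\ell$.
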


These polynomials describe the multiplication by $\ell$, but for our purpose we
need to describe more general endomorphisms of $\Jac\C$, i.e. endomorphisms
corresponding to an element $\alpha$ of $\Z[\eta]$. Thus, we define the
$\alpha$-division polynomials $d_i$ and $e_i$ such that, denoting by $P=(x,y)$
the generic point of $\C$, a non-normalized Mumford form of
$\alpha(P-{P_{\infty}})$ is equal to \[\left\langle \sum_{i=0}^g
d_i(x)X^i,y\sum_{i=0}^{g-1}\frac{e_i(x)}{e_g(x)}X^i \right\rangle.\] 

By Lemma~\ref{lem:smallgenanyg}, we know that
$\alpha=\sum_{i=0}^{g-1}\alpha_i\eta^i$ with $| \alpha_i| =O_\eta(\ell^{1/g})$.
Since the degrees of the $\eta^i(P-{P_{\infty}})$ do not depend on $\ell$, by
Theorem~\ref{lem:deg_delta} applied to Cantor's $\alpha_i$-division polynomials
we prove that the degrees of the $d_i$'s and $e_i$'s are in $O_\eta(\ell^{3/g})$.

\begin{definition}\label{def:alphadivgen} 
    In what follows, we will say that an element of $J$ is 
    $\alpha$-generic if it has weight $g$ and the corresponding
    reduced divisor $\sum_{i=1}^g (P_i-P_{\infty})$ satisfies the following two
    properties:
    \begin{itemize}
        \item For any $i$, the $u$-coordinate of the divisor
            $\alpha(P_i-P_{\infty})$ in Mumford form has   
	    degree $g$;
        \item For any $i\ne j$, the $u$-coordinates of the
            divisors $\alpha(P_i-P_{\infty})$ and $\alpha(P_j-P_{\infty})$
            are coprime.
    \end{itemize}
    
    This implies that if an affine point $P$ occurs in the support of 
    $\alpha(P_i-P_{\infty})$ then neither $P$ nor $-P$ appears in the support
    of another $\alpha(P_j-P_{\infty})$.
\end{definition}
  Suppose there exists $D=\sum_{i=1}^g(P_i-P_{\infty})$ an $\alpha$-generic
  divisor in $J$. We shall consider a system equivalent to $\alpha(D)=0$ but
  let us first introduce some notation. For each point $P_i=(x_i,y_i)$ in the
  support of $D$, we denote $\langle u_i,v_i \rangle$ the Mumford form of
  $\alpha(P_i-P_{\infty})$ and $(a_{ij},b_{ij})_{1\leq j\leq g}$ the
  coordinates of the $g$ points in its support counted with multiplicities,
  which means that for any $i$ the $g$ roots of $u_i$ are exactly the $a_{ij}$,
  and that for any $j$, $b_{ij}=v_i(a_{ij})$.

  \begin{proposition}\label{prop:modelanyg}
     We can model the set of generic $\alpha$-division elements as the solution
     set of a bihomogeneous polynomial system consisting of $O(g^2)$ equations
     in $\F_q[X_1,\ldots,X_g,Y_1,\ldots,Y_{n_y}]$ such that $n_y=O(g^2)$ and
     the degrees $d_x$ and $d_y$ in the $X_i$'s and $Y_j$'s are respectively in
     $O_\eta(\ell^{3/g})$ and $O_\eta(1)$.
  \end{proposition}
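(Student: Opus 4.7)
The strategy will be the one of~\cite[Sec.~4]{AGS17}, transposed to $\alpha$ in place of the multiplication-by-$\ell$ map. The very first ingredient is a degree bound on the $\alpha$-division polynomials $d_k, e_k$: decomposing $\alpha = \sum_{i=0}^{g-1}\alpha_i\eta^i$ as in Lemma~\ref{lem:smallgenanyg} with $|\alpha_i|=O_\eta(\ell^{1/g})$, composing Cantor's formulas for multiplication by $\alpha_i$ with the explicit formulas for $\eta$, and applying Theorem~\ref{lem:deg_delta} to each scalar summand, one sees that the coefficients of the intermediate $u_i,v_i$ are rational in $x_i$ with numerator and denominator degrees in $O_\eta(\ell^{3/g})$. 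This is what the $\alpha$-division polynomials inherit from the $\ell$-case, benefiting from the factor-of-$g$ improvement coming from $|\alpha_i|=O_\eta(\ell^{1/g})$.

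I would then set up the variables. The $X$-block is $(X_1,\ldots,X_g)$, standing for the $x$-coordinates of the support of $D$. For the $Y$-block, I would take $A_{i,j}, B_{i,j}$ for $1\le i,j\le g$ (the coordinates $(a_{ij},b_{ij})$ of the support of $\alpha(P_i-P_\infty)$), together with $g$ additional variables $\tilde Y_i$ for the corresponding $y$-coordinates with the constraint $\tilde Y_i^2=f(X_i)$, giving $n_y=2g^2+g=O(g^2)$. The equations come in three batches. The first batch consists, for each $i$, of the $g$ Vieta identifications between the elementary symmetric functions of $A_{i,1},\ldots,A_{i,g}$ and the coefficients of the monic polynomial $u_i(X)$; after clearing the denominator $d_g(X_i)$, each equation is of degree $O_\eta(\ell^{3/g})$ in the single variable $X_i$ and of degree $\le g=O_\eta(1)$ in the $Y$'s. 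The second batch consists of the $g^2$ evaluations $B_{i,j}\,e_g(X_i)=\tilde Y_i\sum_k e_k(X_i)\,A_{i,j}^k$, whose degree behavior is the same.

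The last batch encodes the condition $\sum_{i,j}(A_{i,j},B_{i,j})\sim g^2P_\infty$ inside $\Jac\C$. Following~\cite{AGS17}, this can be written by expressing the existence of a function in the Riemann--Roch space $L(g^2P_\infty)$ vanishing at all the prescribed points: parametrizing the $O(g^2)$-dimensional space of such functions by linear coordinates (which I would absorb into the $Y$-block without altering the $O(g^2)$ count), the $g^2$ vanishing conditions produce $O(g^2)$ equations that are polynomial of bounded degree in the $Y$'s and do not involve the $X$'s at all. Gathering all three batches yields the required $O(g^2)$ equations; bihomogeneity is then obtained by homogenizing separately in $X$ and in $Y$, and the degree bounds $d_x=O_\eta(\ell^{3/g})$, $d_y=O_\eta(1)$ follow by direct inspection.

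The main obstacle I anticipate is the third batch: encoding $\alpha(D)=0$ naively through iterated Cantor reductions of the composition $\langle\prod_i u_i, V\rangle$ would cascade (and square) degrees $\Theta(g^2)$ times, producing polynomials of uncontrolled degree in the $Y$-block. Using the Riemann--Roch parametrization, as in~\cite{AGS17}, is precisely what keeps $d_y$ at $O_\eta(1)$ and -- crucially -- introduces no further dependence on $\ell$; verifying that the resulting system really does cut out (generically) the locus of $\alpha$-division elements and not some larger auxiliary variety, and that the counts of equations and variables both stay $O(g^2)$ once the extra linear coordinates for $L(g^2 P_\infty)$ are added, is where most of the bookkeeping lies.
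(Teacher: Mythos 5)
Your route through explicit point variables $A_{ij},B_{ij}$ and vanishing of a Riemann--Roch function is genuinely different from the paper's. The paper never introduces coordinates for the support of $\alpha(P_i-P_\infty)$: it keeps only the $(x_i,y_i)$ together with the coefficients of $P,Q$ (so $g^2+g$ variables plus one for the genericity inequality), and expresses the principality condition \emph{directly} as the $g$ congruences $P+Qv_i\equiv 0\bmod u_i$, each unfolding into $g$ equations after the reduction $X^g\mapsto -\sum_{j<g}(d_j(x_i)/d_g(x_i))X^j$. This keeps the variable count smaller and, crucially, carries the full multiplicity structure of $u_i$ for free.

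The concrete gap in your scheme is precisely the one you gesture at but do not resolve. Definition~\ref{def:alphadivgen} forbids the $u_i$ to share roots with each other (for $i\ne j$), but it does \emph{not} force $u_i$ itself to be squarefree. When $u_i$ has a root $a$ of multiplicity $m\ge 2$, the Vieta equations force, say, $A_{i1}=\dots=A_{im}=a$ and hence $B_{i1}=\dots=B_{im}$, so your $m$ vanishing conditions $\varphi(A_{ij},B_{ij})=0$ collapse to a single one; but principality requires $\varphi$ to vanish to order $m$ at $(a,v_i(a))$. The system you write down therefore cuts out a strictly larger variety than the set of $\alpha$-generic $\alpha$-division elements, and the proposition (and the radicality hypothesis later fed into Proposition~\ref{prop:prop3}) would not hold as stated. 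To repair it you would have to add derivative conditions $\varphi^{(n)}(A_{ij},B_{ij})=0$ for $n<m_{ij}$, or strengthen $\alpha$-genericity to force $u_i$ squarefree, but the cleaner fix is exactly the paper's: demand $u_i\mid P+Qv_i$, which is the ``with-multiplicity'' statement in one line and makes the extra $2g^2$ variables, the Vieta batch, and the evaluation batch unnecessary. The degree bookkeeping you did for the first two batches is otherwise sound, and the observation that the third batch introduces no further $\ell$-dependence is correct.
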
 
  
  \begin{proof}
  Following the modelling of~\cite[Sec.~4]{AGS17}, we have $\alpha(D)=0$ if
  and only if the sum of the divisors $\sum_{i=1}^g\alpha(P_i-P_{\infty})$ is a
  principal divisor. The only pole is at infinity, so this is equivalent to the
  existence of a non-zero function $\varphi\in\mathbb F_q(\C)$ of the form
  $P(X)+YQ(X)$ with $P$ and $Q$ two polynomials such that the $g^2$ points
  $(a_{ij},b_{ij})$ are the zeros of $\varphi$, with multiplicities. Since we
  want $\varphi$ to have $g^2$ affine points of intersection with the curve
  $\mathcal{C}$ (once again, counted with multiplicities), the polynomial
  $\Res_Y(Y^2-f,P+YQ)=P^2-fQ^2$ must have degree $g^2$ which yields $2\deg(P)\le
  g^2$ and $2\deg(Q)\le g^2-2g-1$. Exactly one of those two bounds is even (it
  depends on the parity of $g$), and for this particular bound, the inequality
  must be an equality, otherwise the degree of the resultant would not be $g^2$.
  Since the function $\varphi$ is defined up to a multiplicative constant, we
  can normalize it so that the polynomial $P^2+fQ^2$ is monic, which is
  equivalent to enforce that either $P$ or $Q$ is monic depending on the parity
  of $g$.

  For a fixed $i\in[1,g]$, requiring the $(a_{ij},b_{ij})$ to be zeros of
  $\varphi$ amounts to asking for the $a_{ij}$ to be roots of $P(X)+Q(X)v_i(X)$,
  with multiplicities. Since the $a_{ij}$ are by definition the roots of the
  $u_i$, $\alpha(D)=0$ is equivalent to $g$ congruence relations $P+Qv_i
  \equiv 0 \bmod u_i$.  Thus, for any $\alpha$-generic divisor, $\alpha(D)=0$ is
  equivalent to the existence of $P$ and $Q$ satisfying the above $g$ congruence
  relations.
 
  The variables are the coefficients of $P$ and $Q$, as well as the $x_i$ and
  $y_i$. With the degree conditions and the normalization, we have $g^2-g$
  variables coming from $P$ and $Q$.  Adding the $2g$ variables $x_i$ and $y_i$,
  we get a total of $g^2+g$ variables.  Each one of the $g$ congruence relations
  amounts to $g$ equations providing a total of $g^2$ conditions on the
  coefficients of $P$ and $Q$. The fact that the $(x_i,y_i)$ are points of the
  curve yields the $g$ additional equations $y_i^2=f(x_i)$.  Finally, we have to
  enforce the $\alpha$-genericity of the solutions, which can be done by
  requiring that $\prod_i d_g(x_i)e_g(x_i) \prod_{i<j}\Res(u_i,u_j)\not=0$. Note
  that we do not extend Theorem~\ref{lem:deg_delta} to the $\alpha$-division
  polynomials but instead add the
  non-vanishing condition for the denominator $e_g$ of the $v$-coordinate of
  $\alpha(D)$. Still, we get a polynomial system with $g^2+g$ equations in
  $g^2+g$ variables, together with an inequality. 
  
  We now estimate the degrees to which the variables occur in the equations.
  Each congruence relation is obtained by reducing $P+Qv_i$, which is a
  polynomial of degree $O(g^2)$ in $X$, by $u_i$ which is of degree $g$.  We can
  do it by repeatedly replacing $X^g$ by $-\sum_{j<g} (d_j(x_i)/d_g(x_i)) X^j$,
  which we will have to do at most $O(g^2)$ times. Since the $d_j$ have degree
  in $O_\eta(\ell^{3/g})$ in $x_i$, the fully reduced polynomial will have
  coefficients that are fractions for which the degrees of the numerators and of
  the denominators are at most $O_\eta(\ell^{3/g})$ in the $x_i$ variables. In
  these equations, the degree in the $y_i$ variables and in the variables for
  the coefficients of $P$ and $Q$ is 1. The degrees in $x_i$ and $y_i$ in the
  curve equations are $2g+1$ and $2$ respectively.

  It remains to study the degree of the inequality. Each resultant is the
  determinant of a $2g\times 2g$ Sylvester matrix whose coefficients are the
  $d_i$, which have degrees bounded by $O_\eta(\ell^{3/g})$. Since for any $i$
  there are exactly $g$ resultants involving $x_i$ in the product, the degree
  of this inequality in any $x_i$ is in $O_\eta(\ell^{3/g})$, and it does not
  involve the other variables. In order to be able to use
  Proposition~\cite[Prop.~3]{AGS17} that we recall in
  Section~\ref{sec:cplxrmangyg}, we must model this inequality by an equation,
  which is done classically by introducing a new variable $T$ and by using the
  equation $T\cdot\prod_i d_g(x_i)e_g(x_i) \prod_{i<j}\Res(u_i,u_j)=1$.
  
  To conclude, we have a polynomial system with two blocks of variables: the $g$
  variables $x_i$ on the one hand and the $g^2-g$ variables coming from the
  coefficients of $P$ and $Q$, along with the $g$ variables $y_i$ on the other
  hand. The degrees of the equations in the first block of variables grows
  cubically in $\ell^{1/g}$, while the degrees in the other block of variables
  depends only on $\eta$. 
\end{proof}

\subsection{Non-generic kernel elements}\label{sec:nongenanyg}

As in~\cite[Sec.~4]{AGS17}, apart from the neutral element, we expect to
capture the whole kernel of the endomorphism $\alpha$ by using the modelling of
Section~\ref{sec:genanyg}. Contrary to~\cite{AGS17},
Algorithm~\ref{algo:rmanyg} does not require us to find a basis of
$J[\alpha]$ because the determination of the $k_i$'s does only require a single
non-zero element in each $J[\alpha_i]$. Thus, a study of non-generic elements
in $J[\alpha]$ is necessary only if there is no $\alpha$-generic element in
$J[\alpha]$. Such a case happens if and only if the polynomial $\prod_{i=1}^g
d_g(x_i)e_g(x_i)\prod_{i\ne j}\Res(u_i,u_j)$ in the variables $x_1,\ldots,x_g$
vanishes on $J[\alpha]$.  It seems very unlikely that the whole set $J[\alpha]$
would live in such a hypersurface, and if it happens, one can discard the
$\ell$ for which we fail to find an $\alpha$-generic element.  Although it
seems even more unlikely that this situation could happen for sufficiently many
$\ell$ so as to threaten the validity of our complexity bound, we are far from
a proven statement and do not exclude it might be possible to design a highly
non-generic curve providing a counterexample.

Therefore, we follow the non-genericity analysis of~\cite[Sec.~5]{AGS17} except
that we consider $u_i$ and $v_i$ defined as the Mumford form of
$\alpha(P_i-P_\infty)$ instead of $\ell (P_i-P_\infty)$. Let us first briefly review
the non-generic situations that one can encounter,
following~\cite[Sec.~5.1]{AGS17} and keeping the same numbering.

\paragraph{Case 1: Modelling a kernel element of weight $w<g$.} 

    We write $D=\sum_{i=1}^w (P_i-P_{\infty})$ and look for a $\varphi=P(X)+YQ(X)$
    vanishing at each point of each reduced divisor $\alpha(P_i-P_{\infty})$.
    This is similar to the Case 1 of~\cite[Sec.~5.1]{AGS17}.

\paragraph{Case 2: Modelling a kernel element with multiple points.} 
    
It may happen that the element we are looking for is $D=\sum_{i=1}^w
    (P_i-P_{\infty})$ but not all the $P_i$'s are distinct. In that case, we
    rewrite it $D=\sum_{j=1}^s \mu_j (P_j-P_{\infty})$ such that the
    $P_j$'s are distinct and look for a $\varphi=P(X)+YQ(X)$ vanishing at each
    point of each reduced divisor $\mu_j\alpha(P_j-P_{\infty})$. Apart from
    the modification of $u_i$ and $v_i$, the modelling is identical to that
    of~\cite{AGS17}.

\paragraph{Case 4: Modelling a kernel element after reduction.} 

    Even if all the $\alpha(P_i-P_{\infty})$ had full weight, there may still
    be less than $g^2$ points in the union of their supports due to possible
    cancellations of points appearing in the supports of several
    $\alpha(P_i-P_{\infty})$ with different signs. Exactly as
    in~\cite[Sec.~5.1]{AGS17}, if $P$ appears within $\alpha(P_i-P_{\infty})$
    and $\alpha(P_j-P_{\infty})$ with respective multiplicities $\nu_i$ and
    $\nu_j$ of opposite signs, this is modelled by ensuring that the
    corresponding $u_i$, $u_j$, and $v_i+v_j$ share a common factor
    $(X-\xi)^{\nu}$ where $\nu=\max(|\nu_i|,|\nu_j|)$. In that case, we look
    for $\varphi(X,Y)=(X-\xi)^\nu(\widetilde{P}(X)+Y\widetilde{Q}(X))$, with
    $\widetilde{P}$ coprime to $\widetilde{Q}$. Once modified the values of the
    $u_i$ and $v_i$, nothing changes from~\cite{AGS17}. 

\paragraph{Case 5: Modelling a kernel element with multiplicity.} 

   Conversely, $\alpha(P_i-P_{\infty})$ and $\alpha(P_j-P_{\infty})$ can also
   share the same point with multiplicities of identical sign, leading to
   multiplicities in the reduced divisor $\alpha(D)$. Similarly to what was done
   in the Case 5 of~\cite[Sec.~5.1]{AGS17}, we can group the corresponding
   $u_i$, $u_j$, $v_i$ and $v_j$ in polynomials $U$ and $V$ such that $U | V^2
   -f$ and $\deg V < \deg U$, and then look for $\varphi=P(X)+YQ(X)$ such that
   $P+QV \equiv 0 \bmod U$. Once again, nothing changes apart from the
   definition of the $u_i$'s and $v_i$'s.

\paragraph{Case 3: Low weight after applying $\alpha$.} 
   
    We kept this case for the end because it is not a straightforward extension
    of the Case 3 appearing in~\cite[Sec.~5.1]{AGS17}.  Until now, we assumed
    that all the $P_i$'s in the support of $D$ were such that
    $\alpha(P_i-P_{\infty})$ had weight $g$, i.e. $d_g(x_i)\ne 0$. We now want
    to model the case where $D=\sum_{i=1}^w (P_i-P_{\infty})$ such that each
    $\alpha(P_i-P_{\infty})$ has weight $w_i$. In~\cite{AGS17}, this was done
    using a result from~\cite{Ca94} giving a necessary and sufficient condition
    for $\ell(P_i-P_{\infty})$ to be of weight $w_i$. When $\alpha$ is an
    endomorphism other than scalar multiplication, no such result holds a
    priori. In what follows, we address this issue by designing non-generic
    $\alpha$-division polynomials (Definition~\ref{def:nongenalphadiv} below)
    $\Gamma_{\alpha,t}$ and ${\Delta}_{\alpha,t}$ such that $\alpha\left(
    (x,y)-P_{\infty} \right)$ has weight $w$ if and only if
    $\Delta_{\alpha,w}(x)=0$ and $\Gamma_{\alpha,w-1}(x)\ne 0$.

\paragraph{Combining all degeneracies.}

As in~\cite[Sec.~5.2]{AGS17}, we have to consider situations in which several
of the previous cases occur simultaneously. Note that while we wanted to
compute the whole $\ell$-torsion in~\cite{AGS17}, we now only need one kernel
element per endomorphism $\alpha_i$ to determine $\chi_\pi\bmod\ell$.
Therefore, after finding a non-zero solution to any of the subsequent systems,
one need not consider the others. The aim of the Section is to
prove Proposition~\ref{prop:modelnganyg} below, in order to bound the number and
respective sizes (number of equations and variables) of all the systems
modelling non-generic situations.

  \begin{proposition}\label{prop:modelnganyg}
    We can model the set of non-generic elements of $J[\alpha]$ as the solution
    set of $O_\eta(1)$ bihomogeneous polynomial systems each consisting of
    $O(g^2)$ equations in $\F_q[X_1,\ldots,X_g,Y_1,$ $\ldots,Y_{n_y}]$ such
    that $n_y=O(g^2)$ and the degrees $d_x$ and $d_y$ in the $X_i$'s and
    $Y_j$'s are respectively in $O_\eta(\ell^{3/g})$ and $O_\eta(1)$.
  \end{proposition}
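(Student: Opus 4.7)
The plan is to produce, for each possible non\-genericity type, one polynomial system satisfying the stated size and degree bounds, and then to bound the total number of types by a quantity that depends only on $\eta$ (equivalently on $g$). A non-generic element $D\in J[\alpha]$ is characterized by a combinatorial datum: its weight $w\le g$ (Case 1); the multiplicity partition of its support (Case 2); the weight $w_i\le g$ of each $\alpha(P_i-P_\infty)$ (Case 3); a list of pairs $(i,j)$ for which cancellations occur, together with the shared points and multiplicities (Case 4); and a list of pairs whose supports overlap with matching signs (Case 5). Each of these parameters is bounded by a function of $g$, so the number of configurations to consider is $O_\eta(1)$.

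For a fixed configuration, the modelling follows the blueprint of Proposition~\ref{prop:modelanyg}, after the substitutions dictated by Cases 2, 4 and 5: the pairs $u_i,v_i$ are regrouped into polynomials $U,V$ with $U\mid V^2-f$, and we look for $\varphi=\widetilde{P}+Y\widetilde{Q}$ with $\widetilde{P}+\widetilde{Q}V\equiv 0\bmod U$. The $x_i$'s form the $X$\nobreakdash-block of size at most $g$; the coefficients of $\widetilde{P},\widetilde{Q}$ together with the $y_i$'s form the $Y$\nobreakdash-block of size $O(g^2)$. The congruence relations give $O(g^2)$ equations that are linear (degree $1$) in the $Y$\nobreakdash-block and, after reduction modulo $U$, have degrees $O_\eta(\ell^{3/g})$ in the $X$\nobreakdash-block, because the entries of $U$ and $V$ are built from the $\alpha$-division polynomials $d_i,e_i$ whose degrees are $O_\eta(\ell^{3/g})$ by Theorem~\ref{lem:deg_delta} applied to $\alpha=\sum_i\alpha_i\eta^i$ with $|\alpha_i|=O_\eta(\ell^{1/g})$. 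The curve equations $y_i^2=f(x_i)$ and the auxiliary equation $T\cdot(\text{non-vanishing factors})=1$ enforcing the prescribed type add $O(g)$ further equations within the same bounds.

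The main obstacle is Case 3: one needs non-generic $\alpha$\nobreakdash-division polynomials $\Delta_{\alpha,w}(x)$ and $\Gamma_{\alpha,w-1}(x)$ characterizing when $\alpha((x,y)-P_\infty)$ has weight exactly $w$. The Cantor\nobreakdash-style criterion of~\cite{Ca94} is stated for scalar multiplication and does not apply directly. I would construct $\Delta_{\alpha,w}$ and $\Gamma_{\alpha,w-1}$ from the explicit group law: $\alpha(P-P_\infty)$ is obtained by a bounded number of additions of the divisors $\eta^i(P-P_\infty)$ with integer coefficients $\alpha_i$ of size $O_\eta(\ell^{1/g})$, and the weight $w$ condition is equivalent to the simultaneous vanishing of the top $g-w$ coefficients of the resulting $u$-polynomial together with the non-vanishing of the corresponding leading minor. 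Tracking degrees through each group-law step (each a resultant/division of polynomials whose degrees grow by a bounded factor), one shows that the outputs still have degrees in $O_\eta(\ell^{3/g})$, matching the bound required. Plugging these polynomials into the equations and inequality of the modelling above yields, for each Case 3 type, a system of the claimed shape.

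Finally, since the cases combine multiplicatively and each case contributes only a finite number of sub-configurations depending only on $g$, the total count of systems remains in $O_\eta(1)$, and each system fits the stated bounds on the number of equations, the number of variables, and the bidegrees. This proves the proposition.
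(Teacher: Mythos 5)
Your overall architecture matches the paper's: encode each degeneracy pattern by combinatorial data bounded in terms of $g$, derive one polynomial system per pattern, and invoke the degree bounds on the $\alpha$-division polynomials to get $d_x=O_\eta(\ell^{3/g})$. The bookkeeping for Cases 1, 2, 4, 5, the variable blocks, and the $O_\eta(1)$ count of systems are all handled in the same spirit as the paper's normalized non-genericity tuples and Tables~\ref{tab:variables}--\ref{tab:degrees}.

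However, there is a genuine gap in your treatment of Case~3. You assert that \enquote{the weight $w$ condition is equivalent to the simultaneous vanishing of the top $g-w$ coefficients of the resulting $u$-polynomial together with the non-vanishing of the corresponding leading minor.} That is exactly the analogue of Cantor's weight-drop criterion for scalar multiplication, and the paper explicitly flags that this is the one statement that does \emph{not} transfer a priori to a general $\alpha$: the generic $\alpha$-division polynomials $d_i$, $e_i$ are obtained by composing rational maps and reduction steps that implicitly assume full weight; on the locus where $d_g$ (or $e_g$) vanishes, the formal expression $\sum_i d_i(x)X^i$ need not coincide with the $u$-polynomial of the reduced divisor $\alpha(P-P_\infty)$. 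The paper circumvents this by an \emph{iterative} construction: set $\Delta_{\alpha,g-1}=\GCD(d_g,e_g)$, then repeatedly recompute $\alpha((x,y)-P_\infty)$ in the quotient ring $\F_p[x,y]/(\Delta_{\alpha,k}(x),y^2-f(x))$, defining $\Gamma_{\alpha,k-1}$ from the leading data of the recomputed Mumford form and $\Delta_{\alpha,k-1}=\GCD(\Delta_{\alpha,k},\Gamma_{\alpha,k-1})$. This recomputation is what makes the characterization correct; simply reading off vanishing of the top coefficients of the generic formula does not. Your proposal to \enquote{track degrees through each group-law step} is also vaguer than the paper's use of Theorem~\ref{lem:deg_delta} applied to the $\alpha_i$-division polynomials, but that part could likely be patched. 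The missing ingredient is the justification, via the iterative/quotient-ring construction (or an actual proof of your claimed equivalence), that the non-generic $\alpha$-division polynomials $\Delta_{\alpha,t}$ and $\Gamma_{\alpha,t}$ correctly cut out $V_{\alpha,t}$ while keeping degrees in $O_\eta(\ell^{3/g})$.
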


To do so, we first describe a data structure to represent any combination of
the non-generic cases detailed above. Then, we explain how we can transform any
occurrence of this data structure into a polynomial system. Throughout this
transformation, we will keep track of equations and variables that we need and
sum everything up in Tables~\ref{tab:variables} and~\ref{tab:degrees}.
Everything here is a careful adaptation of~\cite[Sec.~5.2]{AGS17} with three
notable differences: the fact that we consider $\alpha(D)$ instead of $\ell D$,
the slight difference in defining non-generic $\alpha$-division polynomials and
most importantly the fact that the degrees $d_x$ are now in
$O_\eta(\ell^{3/g})$ instead of $O_g(\ell^3)$. Any reader convinced by this
very brief overview can skip to these tables to avoid technicalities. 

\paragraph{A data structure to describe each type of non-genericity.}

We consider an $\alpha$-torsion divisor $D$ of weight $w\le g$ (like in Case
1). Next, a partition $\mu=(\mu_1,\ldots,\mu_k)$ of $w$ is picked
to represent the multiplicity pattern in the $u$-coordinate of the
$\ell$-torsion divisor, as in Case~2 so that
$D=\sum_{i=1}^k\mu_i(P_i-\infty)$. Then, a vector $t=(t_1,\ldots,t_k)$ is
chosen, to represent the weights of the $P_i$ after applying
$\mu_i\,\alpha$ as in Case~3: for $i$ in $[1,k]$, the reduced divisor
$\mu_i\,\alpha(P_i-\infty)$ is of weight $t_i$. Then, we need to consider
how many common or opposite points these divisors have in their supports to take
into account Cases~4 and~5. We denote by $Q_1,\ldots,Q_s$ the points in
the union of the supports of all the reduced divisors
$\mu_i\,\alpha(P_i-\infty)$, keeping only one point in each orbit under the
hyperelliptic involution. We represent the non-genericity by a $k\times s$
matrix $M$ such that its non-zero entries $m_{ij}$ verify
$m_{ij}=\ord_{Q_j}(\mu_i\,\alpha(P_i-\infty))$ when $Q_j$ is in the support
of $\mu_i\,\alpha(P_i-\infty)$ or
$m_{ij}=-\ord_{Q_j'}(\mu_i\,\alpha(P_i-\infty))$ when the hyperelliptic
conjugate $Q_j'$ of $Q_j$ is in the support. Note that this matrix, that we
shall call the matrix of shared points, represents both multiplicities and
non-semi-reduction. Since the row $i$ represents what happens with points in
the support of $\mu_i\,\alpha(P_i-\infty)$, which is of weight $t_i$, the
sum of the absolute values of the entries of the row $i$ of $M$ is equal to
$t_i$. Also, by construction, there is at least one non-zero
entry in each column. An additional complication arises when one of the $P_i$ is a
ramification point, i.e. when its $y$-coordinate is zero, because this would
cause multiplicities if care is not taken, leading to non-radicality of the
polynomial system we build. Since this corresponds to $P_i-\infty$ being of
order 2, the weight $t_i$ is equal to 0 or 1. If $t_i=0$, then the divisor
$D-\mu_i(P_i-\infty)$ is also an $\alpha$-torsion divisor of weight
$w-\mu_i$, so that we can reconstruct $D$ from another polynomial system.
There is however no obvious way to avoid the possibility $t_i=1$. Therefore,
we will encode the fact that $P_i$ is a ramification point by a bit
$\epsilon_i$ that can be set only in the cases where $t_i=1$ and $\mu_i=1$.
Changing the order of the columns of $M$ amounts to permuting the points $Q_j$.
Also, changing the sign of all the entries of a column $j$ corresponds to
taking the opposite of the point $Q_j$. While it would not change the final
complexity not to do so, it makes sense to consider only normalized
tuples, in the sense that the columns of $M$ are sorted in lexicographical
order, and the choice between a point $Q_j$ and its opposite is done so that
the sum of all elements in the corresponding column is nonnegative. We remark
that this is not enough to guarantee that two normalized tuples do not describe
similar situations.  This is not a problem for the general algorithm: the same
$\alpha$-torsion elements can correspond to solutions of two different systems,
but what is important to us is non-multiplicity (i.e. radicality of the ideal)
in each individual system. All this discussion is summed up by the following definition:

\begin{definition}{\cite[Def.~13]{AGS17}}\label{def:ngt}
    A {\em normalized non-genericity tuple} is a tuple
    $(w,\mu,t,\epsilon, M)$, where
    $1\le w\le g$ is an integer, $\mu=(\mu_1,\ldots,\mu_k)$ is a
    partition of $w$, $t$ and $\epsilon$ are vectors $t=(t_1,\ldots,t_k)$ 
    and $\epsilon=(\epsilon_1,\ldots,\epsilon_k)$ of the same length
    as $\mu$
    with $1\le t_i\le g$ and $\epsilon_i\in\{0,1\}$, where $\epsilon_i$
    can be $1$ only if $t_i=1$ and $\mu_i=1$,
    and finally
    $M$ is
    a matrix with $k$ rows and $s$ columns, where $0\le s\le g\,k$, and
    its entries are integers such that:
    \begin{itemize}
        \item For all $1\le i\le k$, the sum of the absolute values of the entries on the row
            $i$ is equal to $t_i$;
        \item The columns are sorted in lexicographical order;
        \item The sum of the rows of the matrix is a vector whose coordinates
	  are nonnegative.
    \end{itemize}
    \end{definition}

We can follow the analysis of~\cite[Sec.~5.2]{AGS17} to describe more
explicitly the equations and their degrees / number of variables, and remark
that the only part that does not generalize readily is the definition of
non-generic $\alpha$-division polynomials, as in the Case 3 above. Let us first
fix this issue. 

When the weight $t_i$ of $\mu_i\,\alpha(P_i-P_{\infty})$ is strictly smaller
than $g$, the usual coordinate system given by the Mumford form is no longer
available, due to the vanishing of the denominator $e_g(x_i)$. We define an
adequate coordinate system to describe non-generic elements of weight $t$. Let
us consider the variety 
$$V_{\alpha,t}=\left\lbrace (x,y)\in\C\mid \alpha\left( (x,y)-P_{\infty}
\right)\text{ has weight } t\right\rbrace.$$

We want to define polynomials $\Delta_{\alpha,t}$ and $\Gamma_{\alpha,t}$ such
that a point is in $V_{\alpha,w}$ if and only
if $\Delta_{\alpha,w}(x)=0$ and $\Gamma_{\alpha,w-1}(x)\ne 0$
iteratively. First, $\Delta_{\alpha,g-1}=\GCD(d_g,e_g)$, so that the points
$(x,y)$ of $V_{\alpha,g-1}$ satisfy $\Delta_{\alpha,g-1}(x,y)=0$. Assuming
that for $k<g$ we have already constructed a squarefree polynomial
$\Delta_{\alpha,k}$ vanishing on the abscissae of points in $V_{\alpha,k}$, then
one can compute $\alpha\left( (x,y)-P_{\infty} \right)$ over
$\F_p[x,y]/(\Delta_{\alpha,k}(x),y^2-f(x))$. By our induction hypothesis, the
Mumford form of the result is $\langle u,v\rangle$, with $u$ of degree $k$ and
$v$ of degree $k-1$. Let $\Gamma_{\alpha,k-1}$ be the product of $\LC(u)$ with
the denominator of $\LC(v)$, then $V_{\alpha,k}$ is the set of points $(x,y)$
such that $\Delta_{\alpha,k}(x)=0$ and $\Gamma_{\alpha,k-1}(x)\ne 0$.
Furthermore, $\Delta_{\alpha,k-1} =\GCD(\Delta_{\alpha,k},\Gamma_{\alpha,k-1})$
vanishes on the points of $V_{\alpha,k-1}$.

To avoid multiplicities, we replace ${\Delta}_{\alpha,
t}(x)$ by the square-free polynomial whose roots are exactly the roots of
$\Delta_{\alpha, t}(x)$ that are not roots of $\Gamma_{\alpha,t-1}(x)$ when it
is necessary. Note that the degrees of the $\Delta$ and $\Gamma$ are by
construction bounded by $\deg\Delta_{\alpha,g-1}\le \deg d_g$ with $\deg d_g$
itself bounded by $O_\eta(\ell^{1/g})$.
This way, we state an analogue of~\cite[Def.~14]{AGS17} for
non-generic $\alpha$-division polynomials:

\begin{definition}\label{def:nongenalphadiv}
    The non-generic $\alpha$-division polynomials $\frak{u}_{\alpha,t}$ and
    $\frak{v}_{\alpha,t}$ are the polynomials in $X$ with coefficients in
    $\F_p[x,y]/({\Delta}_{\alpha, t}(x), y^2-f(x))$ such that
    $$ \alpha ((x,y)-\infty) = \Big\langle \frak{u}_{\alpha,t}(X),
    \frak{v}_{\alpha,t}(X)\Big\rangle,$$
    in weight-$t$ Mumford representation: $\frak{u}_{\alpha,t}(X)$ is monic
    of degree $t$, $\frak{v}_{\ell,t}(X)$ is of degree at most
    $t-1$ and they satisfy $\frak{u}_{\alpha,t}\, |\, \frak{v}_{\alpha,t}^2-f$.
\end{definition}

Now that we have all the ingredients to describe any non-generic situation, let us prove Proposition~\ref{prop:modelnganyg} by writing carefully the systems coming from non-genericity tuples and bounding their respective sizes (number of variables and degrees).

\begin{proof}[Proof of Proposition~\ref{prop:modelnganyg}]
    As in~\cite{AGS17}, we encode each possible non-generic situation by a normalized
    non-genericity tuple $(w,\mu,t,\varepsilon,M)$ in the sense of
    Definition~\ref{def:ngt}, and derive an associated polynomial system whose
    solution set corresponds to elements $D\in J[\alpha]$ such that: 
\begin{itemize}
  \item the reduced divisor $D$ of weight $w$ has the form $\sum_{i=1}^k\mu_i P_i$ with
    distinct $P_i$'s,
  \item each $\mu_i\,\alpha(P_i-P_{\infty})$ has weight $t_i$,
  \item each $\varepsilon_i$ is in $\{0,1\}$ and such that $\varepsilon_i=1$ if
    and only if $t_i=\mu_i=1$.
  \item the $k\times s$ matrix $M$ represents the points shared by the
    $\mu_i\,\alpha(P_i-P_{\infty})$ as in the discussion above, with
    $s\le gk$.
\end{itemize}

Following~\cite[Sec.~5.2]{AGS17}, let us write the equations associated to a
non-genericity tuple $(w,\mu,t,\epsilon,M)$.

First, we need variables for the coordinates of the $P_i$ such that the $\alpha$-torsion element is $D=\sum_{i=1}^k\mu_i(P_i-\infty)$, with $P_i\ne\pm P_j$ for all $i\ne j$. 
As a consequence, we introduce $2k$ variables for the coordinates $(x_i,y_i)$ of all the points $P_i$. Since these points are on the curve, they satisfy $y_i^2=f(x_i)$, however if $P_i$ is a ramification point this can be simplified into $y_i=0=f(x_i)$, which avoids multiplicities. We get a first set of equations
\begin{equation}\label{eq:eq1}\tag{Sys.1}    
  \left\{    
    \begin{array}{rl}    y_i^2=f(x_i)\not=0,\quad &\text{for all $i$ in $[1,k]$ such that    $\epsilon_i=0$}, \\    y_i=f(x_i)=0,\quad &\text{for all $i$ in $[1,k]$ such that         $\epsilon_i=1$}.\\    
    \end{array}\right.
  \end{equation}
  
  We model the fact that $P_i\ne \pm P_j$ for $i\ne j$ via the following set of inequalities:
  \begin{equation}\label{eq:eq2}\tag{Sys.2}  
    x_i\ne x_j,\quad \text{for all $i,j$ in $[1,k]$ such that $i\neq j$}. 
  \end{equation}
  
  The next step is to enforce the fact that the element $\mu_i\,\alpha(P_i-\infty)$ is of weight $t_i$. For the indices for which $t_i<g$, this is encoded by the equation defining $V_{\mu_i\,\alpha,t_i}$:

  \begin{equation}\label{eq:eq3}\tag{Sys.3}
    \left\{
    \begin{array}{l}
    \Delta_{\mu_i\,\alpha, t_i}(x_i) = 0,\\
    \Gamma_{\mu_i\,\alpha, t_i-1}(x_i) \ne 0,\\
    \end{array}
    \right.
        \quad \text{for all $i$
in $[1,k]$ such that $t_i<g$}.
\end{equation}
while for the indices for which $t_i=g$, this is encoded by the non-vanishing of the leading coefficient of the $\mu_i\,\alpha$-division polynomial:
\begin{equation}\label{eq:eq4}\tag{Sys.4}    
  d_{g}(x_i) \ne 0 ,\quad \text{for all $i$ in $[1,k]$ such that $t_i=g$}.
\end{equation}

We now need to model the fact that the $\mu_i\,\alpha(P_i-\infty)$ satisfy the conditions given by the matrix $M$. We write $\mu_i\,\alpha(P_i-\infty)=\langle u_i(X), v_i(X)\rangle$ in Mumford representation, where $u_i(X)$ and $v_i(X)$ correspond the $\mu_i\,\alpha$-division polynomials if $t_i=g$ or the non-generic division polynomials $\frak{u}_{\mu_i\,\alpha,t_i}$ and $\frak{v}_{\mu_i\,\alpha,t_i}$, if $t_i<g$. In both cases, these are polynomials in $X$ whose coefficients are polynomials in $x_i$ and $y_i$. Recall that the entries of $M$, denoted by$(m_{ij})_{i\in[1,k], j\in[1,s]}$, are such that $m_{ij}$ is the order of $Q_j$ in $\mu_i\,\alpha(P_i-\infty)$ if it is positive, or the opposite of the order of $Q_j'$ if it is negative. To this effect, we introduce $s$ new variables $\xi_j$ for the abscissae of the $Q_j$, and the following equations enforce the multiplicities:
\begin{align}    u_i^{(n)}(\xi_j)=0,\quad        & \text{for all $i,j$ in $[1,k]\times[1,s]$ and for all $n \le        \vert m_{ij}\vert-1$}\label{eq:eq5}\tag{Sys.5} \\    u_i^{(\vert m_{ij}\vert)}(\xi_j) \ne 0,\quad        & \text{for all $i,j$ in $[1,k]\times[1,s]$}\label{eq:eq6}\tag{Sys.6} \\    v_i(\xi_j)-v_{i'}(\xi_j)=0,\quad        & \text{for all $i,i',j$ such that $        m_{ij}m_{i'j}>0$}\label{eq:eq7}\tag{Sys.7} \\    v_i(\xi_j)+v_{i'}(\xi_j)=0,\quad        & \text{for all $i,i',j$ such that $        m_{ij}m_{i'j}<0$}\label{eq:eq8}\tag{Sys.8} \\    \xi_{j'} \not = \xi_{j},\quad &        \text{for all $j\not=j'$}.\label{eq:eq9}\tag{Sys.9}
\end{align} 
In Equations~\ref{eq:eq5} and~\ref{eq:eq6}, the notation $u_i^{(n)}$ is for the $n$-th derivative of $u_i$. This simple way of describing multiple roots is valid because the characteristic is large enough.

The next step of the construction is to consider a semi-reduced version of the divisor $\alpha(D) = \sum_{i=1}^k\mu_i\,\alpha(P_i-\infty)$. This semi-reduction process can be described directly on the matrix $M$: if two entries in a same column have opposite signs, a semi-reduction can occur (corresponding to subtracting the principal divisor of the function $(x-\xi_j)$), thus reducing the difference between these entries. This semi-reduction can continue until one of these two entries reaches zero. This whole process can be repeated as long as there are still columns containing entries with opposite signs.

Using this process, we compute a matrix $\widetilde{M}$ with the same dimensions such that if $M$ describes all the multiplicities in a divisor, then $\widetilde{M}$ describes all the multiplicities of a semi-reduced divisor equivalent to the input divisor. More precisely, the matrix $\widetilde M$ satisfies the following properties: (1) In each column, all elements are nonnegative; (2) The sum of the rows of $M$ equals the sum of the rows of $\widetilde M$; (3) For all $i,j$ such that $m_{i,j}$ is nonnegative, $\widetilde m_{ij}\leq m_{ij}$.

The function $\varphi$ that we will use to model the principality of the divisor $\alpha(D)$ will have two parts: a product of ``vertical lines'' corresponding to semi-reductions, and a part of the form $P(X)+YQ(X)$, where $P$ and $Q$ are coprime.  Modelling the existence of this second part requires to introduce new entities $\widetilde{u}_i$ that are the $u_i$ polynomials from which we remove the linear factors coming from semi-reduction as described by $\widetilde{M}$. Formally, we have the following equations defining $\widetilde{u}_i$:
\begin{equation}\label{eq:eq10}\tag{Sys.10}    
  u_i(X) = \widetilde{u}_i(X) \prod_{j=1}^s (X-\xi_j)^{\vert m_{ij}    
  \vert - \widetilde{m}_{ij}},\quad    \text{for all $i\in[1,k]$}.
\end{equation}
Indeed, by definition of the matrix $M$, the factor $(X-\xi_j)^{\vert m_{ij}\vert}$ divides exactly $u_i(X)$, and the factor $(X-\xi_j)^{\widetilde{m}_{ij}}$ divides exactly $\widetilde{u}_i(X)$. In order to express these conditions efficiently in the polynomial system, we introduce new variables for the coefficients of the $\widetilde{u}_i$ polynomials. Since we are now dealing with a semi-reduced divisor, we can consider its Mumford representation, i.e. two polynomials $U$ and $V$ with the following properties:
\begin{align}   
  & U = \prod_{i=1}^k \widetilde{u}_i, \quad U | V^2-f,    
  \label{eq:eq11}\tag{Sys.11} \\   
  & V \equiv v_i \bmod \widetilde{u}_i, \quad        
  \text{for all $i\in[1,k]$}. \label{eq:eq12}\tag{Sys.12}
\end{align}
The expression of $U$ is simple enough, so we do not have to introduce new variables for its coefficients. However, this will be necessary for the coefficients of the $V$ polynomial. Finally, in order to impose that the semi-reduced part of $\varphi$ has exactly the zeros described by this divisor, we have the equation
\begin{equation}\label{eq:eq13}\tag{Sys.13}   
  P + QV \equiv 0 \bmod U,
\end{equation}
which is expressed with new variables for the coefficients of $P$ and $Q$.

In Table~\ref{tab:variables}, we summarize all the variables used in the
polynomial system and count them. A key quantity for this count is the degree
of $U$ which is the sum of the degrees of the $\widetilde{u}_i$'s. It can be
computed directly from the tuple $(w,\mu,t,\epsilon,M)$. Then, to ensure
existence and unicity of the $V$ polynomial to represent the semi-reduced
divisor, we have to impose that $\deg V < \deg U$, so that we have exactly
$\deg U$ variables for the coefficients of $V$. For the polynomials $P$ and
$Q$, we need the degree of $P^2-Q^2f$ to be exactly $\deg U$. After a
normalization depending on the parity of $\deg U$, we get $\deg U-g$ variables
for their coefficients.

In the above process of turning the systems describing $J[\ell]$ into systems describing
$J[\alpha]$, we did not add any new variable, so that the study
of~\cite[Sec.~5.2]{AGS17} recalled in Table~\ref{tab:variables} is still valid
and in particular the total number of variables is bounded by $4g^2+g$.

\begin{table}[ht]    \centerline{%
    \begin{tabular}{lll}        \hline        Variables & Number of variables & Bound\\        \hline        Coordinates $(x_i, y_i)$ of $P_i$  & $2k$ & $2g$ \\        Abscissae $\xi_j$ of shared points & $s$, column-size of the matrix $M$ & $g^2$\\         Coefficients of the $\widetilde{u}_i$ polynomials & $\deg U = \sum_i (t_i        - \sum_j (\vert m_{ij} \vert - \widetilde{m}_{ij}) )$ & $g^2$\\        Coefficients of the $V$ polynomial & $\deg U$ & $g^2$ \\        Coefficients of the $P$ and $Q$ polynomials & $\deg U - g$ &        $g^2-g$ \\        \hline        Total   & $s+2k+3\deg U-g$ & $4g^2+g$ \\        \hline    \end{tabular}}    \caption{Summary of the variables in the polynomial system    corresponding to a normalized non-genericity tuple    $(w,\mu,t,\epsilon,M)$.}    
    \label{tab:variables}
  \end{table}
\begin{table}[ht]    \centerline{%
    \begin{tabular}{llll}        \hline        Equations reference & Number of equations (and bound) & $\deg_1$ & $\deg_2$ \\        \hline        Eq. and Ineq. \ref{eq:eq1} & $2k\le 2g$  & $2g+1$ & $\le 2$ \\InEq. \ref{eq:eq2} & $k(k-1)/2\le g(g-1)/2$ & $1$ & $0$ \\        Eq. and Ineq. \ref{eq:eq3}   & $\le 2g$ & $O_\eta(\ell^{3/g})$ & $0$ \\        InEq. \ref{eq:eq4} & $\le g$ & $O_\eta(\ell^{3/g})$ & $0$ \\        Eq. \ref{eq:eq5}   & $\sum_{i=1}^k \sum_{j=1}^s |m_{ij}|\le g^4$ &                    $O_\eta(\ell^{3/g})$ & $\le g$ \\        InEq. \ref{eq:eq6} & $ks\le g^3$ & $O_\eta(\ell^{3/g})$ & $\le g$ \\        Eq. \ref{eq:eq7} and \ref{eq:eq8} & $\le k^2s\le g^4$ & $O_\eta(\ell^{3/g})$ & $\le g$\\        InEq. \ref{eq:eq9} & $\le s^2\le g^4$ & $0$ & $1$ \\        Eq. \ref{eq:eq10}  & $\sum_{i=1}^k t_i\le g^2$ & $O_\eta(\ell^{3/g})$ & $\le g$\\        Eq. \ref{eq:eq11}  & $\deg U \le g^2$ & $0$ & $O(g^3)$ \\        Eq. \ref{eq:eq12} & $\sum_{i=1}^k \deg \widetilde{u}_i \le g^2$ &                    $O_\eta(\ell^{3/g})$ & $O(g^2)$\\        Eq. \ref{eq:eq13} & $\deg U\le g^2$ & $0$ & $O(g^3)$ \\        \hline    \end{tabular}}    \caption{Summary of the degrees of the equations in the polynomial    system corresponding to a normalized non-genericity tuple    $(w,\mu,t,\epsilon,M)$.}    \label{tab:degrees}
  \end{table}
As for the number of equations and their respective degrees, the only
difference with~\cite{AGS17} comes from the fact that the coefficients of the
$u_i$ and $v_i$ have degrees in the $x_i$'s bounded by $O_\eta(\ell^{3/g})$
instead of $O_\eta(\ell^3)$. For convenience, we also define $\deg_1$ as the
degree with respect to the variables $x_i$ and $\deg_2$ for all the other
indeterminates (we moved the variables $y_i$ to the second group because they
only appear with degree $\le 2$).

An updated version~\cite[Tab.~2]{AGS17} is given
by Table~\ref{tab:degrees}.  In particular, there are at most $O(g^4)$
equations involving at most $O(g^2)$ variables, and apart from the $x_i$'s, the
variables have degrees bounded by $O(g^3)$. This shows that any system
corresponding to a non-genericity tuple satisfies the degree conditions of
Proposition~\ref{prop:modelnganyg}. As in~\cite{AGS17}, the number of such
tuples is bounded by $g^{O(g^3)}$ and Proposition~\ref{prop:modelnganyg} is
proved. 
\end{proof}

\section{Complexity analysis}\label{sec:cplxrmangyg}

Now that we have modelled subsets of $J[\alpha]$ by polynomial systems whose
sizes in terms of equations, variables and degrees have been carefully bounded,
we apply the geometric resolution algorithm and bound its complexity.

\subsection{Solving the polynomial systems modelling \texorpdfstring{$J[\alpha ]
$}{}}\label{sec:cplxsolsys}

Just as in~\cite{AGS17}, we use geometric resolutions to describe $0$-dimensional (i.e.
finite) sets $V\subset \overline{\mathbb F_q}^n$ where $V$ is defined
over $\F_q$. The terminology
here is borrowed from \cite{cafure2006fast}, see also \cite{GiuLecSal01}. 

\begin{definition}[Geometric resolution]
An  $\mathbb F_{q^e}$-geometric resolution of $V$ is a tuple 
$((\ell_1,\ldots, \ell_n), Q, (Q_1,\ldots, Q_n))$ where:
\begin{itemize}
  \item The vector $(\ell_1,\ldots,\ell_n)\in\mathbb F_{q^e}^n$ is such
    that the linear form 
    $$\begin{array}{rccc}\ell : &\overline{\mathbb
      F_q}^n&\rightarrow&\overline{\mathbb F_q}\\
      &(x_1,\ldots, x_n)&\mapsto &\sum_{i=1}^n
      \ell_i x_i\end{array}$$ takes distinct values at all points in $V$. The linear form
    $\ell$ is called the primitive element of the geometric resolution;
  \item The polynomial $Q\in\mathbb F_{q^e}[T]$ equals
    $\prod_{\mathbf x\in V}(T-\ell(\mathbf x));$
  \item The polynomials $Q_1,\ldots, Q_n\in\mathbb F_{q^e}[T]$ parametrize $V$
    by the roots of the polynomial $Q$, i.e.
    $$V = \{(Q_1(t),\ldots, Q_n(t))\mid t\in\overline{\mathbb F_q},  Q(t) = 0\}.$$
\end{itemize}
\end{definition}
We will need to bound the complexity of computing geometric resolutions of
bihomogeneous polynomial systems. We do so by using a variant
of~\cite[Prop.~3]{AGS17}, which is restated here.

\begin{proposition}\label{prop:prop3}\cite[Prop.~3]{AGS17}
  There exists a probabilistic Turing machine $\mathbf T$ which takes as input
polynomial systems with coefficients in a finite field $\mathbb F_q$
  and which satisfies the following
  property. For any function $h:\Z_{>0}\rightarrow \Z_{>0}$, for any
  positive number $C>0$ and for any $\varepsilon>0$, there exists a function
  $\nu:\Z_{>0}\rightarrow\Z_{>0}$ and a positive number
  $D>0$ such that for all positive integers $g, \ell, n_x, n_y, d_x, d_y, m >
  0$ such that $n_x < C\, g$, $n_y<h(g)$, $d_x<h(g)\, \ell^C$, $d_y<h(g)$,
  $m<h(g)$, for any prime power $q$ such that the prime number $p$ dividing $q$
  satisfies $2^{n_x+n_y}d_x^{n_x}\, d_y^{n_y} < p$, and for any polynomial system $f_1,\ldots, f_m\in
  \mathbb F_q[X_1,\ldots, X_{n_x}, Y_1,\ldots, Y_{n_y}]$ such that 
  \begin{itemize}
    \item for all $i\in
      [1,m]$, $\deg_x(f_i)\leq d_x$ and $\deg_y(f_i)\leq d_y$,
    \item the ideal $I= \langle f_1,\ldots, f_m\rangle$ has dimension $0$ and is
  radical,
  \end{itemize}
  the Turing machine $\mathbf T$ with input $f_1,\ldots, f_m$ returns an
    $\mathbb F_{q^{\lceil\nu(g)\log\ell\rceil}}$-geometric
      resolution of the variety $\{\mathbf x\in\overline{\mathbb F_q}\mid
      f_1(\mathbf
    x) = \dots = f_m(\mathbf x) = 0\}$ with probability at least $5/6$, using
      space and time bounded above by $\nu(g)\,
      \ell^{D\,g}\,(\log q)^{2+\varepsilon} $.
\end{proposition}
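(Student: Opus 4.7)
The plan is to invoke the geometric-resolution machinery of Giusti--Lecerf--Salvy \cite{GiuLecSal01} in the refined form of Cafure--Matera \cite{cafure2006fast}, and to quantify its cost on a \emph{bihomogeneous} input. The crucial input-dependent quantity that appears in the complexity of the Kronecker solver is the geometric degree of the intermediate lifted varieties. For a bihomogeneous system with blocks of sizes $n_x$ and $n_y$ and bidegree bounds $(d_x, d_y)$, this is controlled by the multi-homogeneous B\'ezout bound $\binom{n_x+n_y}{n_x}\, d_x^{n_x}\, d_y^{n_y}$. With the constraints $n_x < Cg$, $n_y < h(g)$, $d_x < h(g)\,\ell^{C}$ and $d_y < h(g)$, this bound is of the form $\nu_0(g)\,\ell^{D_0\, g}$ for some $D_0$ depending only on $C$ and a function $\nu_0$ depending only on $g$.

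The first step is to place the system in a \emph{lifting fiber} setup: one draws a random affine change of coordinates on the $X$-block so as to bring the system into a Noether position with respect to a parameter variable and to guarantee that the successive intermediate ideals cut out equidimensional varieties of the expected degrees. The hypothesis $2^{n_x+n_y}\, d_x^{n_x}\, d_y^{n_y} < p$ gives enough freedom, via Schwartz--Zippel over a suitable probability space, to make this step succeed with probability at least $5/6$. The second step is to compute a geometric resolution incrementally: at each stage $i$, one has a resolution of the intermediate variety $V_i$ and lifts it to $V_{i+1}$ using a Newton-like operator (this is where radicality of the final ideal and the large-characteristic hypothesis are used, to ensure the Jacobian is invertible modulo the appropriate powers of the lifting parameter). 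Finally one specializes back to recover the desired geometric resolution.

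The cost of each lifting step is $\softO(L \cdot \deg V_i^2)$ arithmetic operations in the base field, where $L$ is the straight-line-program cost of evaluating the input system (polynomial in $g$, $n_x$, $n_y$, $d_x$, $d_y$, and hence in $g$ and $\ell^{C}$). Summing over the $O(g^2)$ steps and using the multi-homogeneous B\'ezout bound gives a total arithmetic complexity in $\nu(g)\,\ell^{D g}$ for some absolute constants. To obtain the actual bit-complexity, one must work over an extension $\F_{q^e}$ with $e = \lceil \nu(g)\log\ell\rceil$ so as to guarantee the existence of a separating linear form (the primitive element) among the $q^e$ candidates; by Schwartz--Zippel this succeeds with the required probability as soon as $q^e$ exceeds the square of the degree of the final variety. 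Arithmetic in $\F_{q^e}$ has bit cost $\softO(e\log q) = O_g((\log q)^{1+\varepsilon})$, producing the extra factor $(\log q)^{2+\varepsilon}$ in the final bound.

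The main technical obstacle is the careful bookkeeping of the multi-homogeneous B\'ezout bound through every lifting step and the verification that the generic position hypotheses can be met with probability at least $5/6$ within the allowed degree extension; both points are handled exactly as in \cite[Prop.~3]{AGS17} (and ultimately rely on \cite{GiuLecSal01,cafure2006fast}), and no new difficulty arises because the degree bounds on the $Y$-block are bounded by $h(g)$, so only the $X$-block contributes the factor $\ell^{D g}$.
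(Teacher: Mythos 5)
Your proposal is a faithful reconstruction of the argument the paper delegates entirely to \cite[Sec.~3]{AGS17} (the paper's own proof of this proposition is literally the one-line citation), assembling the same ingredients: the multi-homogeneous B\'ezout bound controlling intermediate degrees, a randomized change of coordinates into Noether position with a Schwartz--Zippel estimate that explains the lower bound on $p$, incremental Newton lifting made licit by radicality and large characteristic, and a field-degree extension $e=\lceil\nu(g)\log\ell\rceil$ to guarantee a separating linear form. The one loose end is the bookkeeping behind the exponent $2+\varepsilon$ on $\log q$: your sketch accounts for a single factor $(\log q)^{1+\varepsilon}$ from arithmetic in $\F_{q^e}$ but does not isolate where the second factor of $\log q$ enters (in the source it comes from the number of Newton/Frobenius iterations and from $p$-adic precision management), so the final step to $(\log q)^{2+\varepsilon}$ is asserted rather than derived; this is an imprecision, not a flaw in the approach.
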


\begin{proof}
  This is done in~\cite[Sec.~3]{AGS17}.
\end{proof}

\begin{proposition}\label{prop:sysanyg} 
  For any $\varepsilon>0$, there is a constant $D$ such that for any
  endomorphism $\alpha\in\Z[\eta]$ of norm a multiple of $\ell >g$ coprime to the
  base field characteristic, there is a Monte Carlo algorithm which computes an
  $\mathbb F_{q^e}$-geometric resolution of the sub-variety of $J[\alpha]$
  consisting of $\alpha$-generic $\alpha$-torsion elements, where
  $e=O_\eta(\log\ell)$.  The time and space complexities of this algorithm are
  bounded by $O_\eta(\ell^{D}(\log q)^{2})$ and it returns the correct
  result with probability at least $5/6$.
\end{proposition}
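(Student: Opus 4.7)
The plan is to apply Proposition~\ref{prop:prop3} to the polynomial system produced by Proposition~\ref{prop:modelanyg}. That system has $n_x = g$ variables $X_i$, $n_y = O(g^2)$ variables $Y_j$, $m = O(g^2)$ equations, and degree bounds $d_x = O_\eta(\ell^{3/g})$ in the $X_i$'s and $d_y = O_\eta(1)$ in the $Y_j$'s. Choosing $C = 3$ in Proposition~\ref{prop:prop3} (to cover the edge case $g = 1$, where $d_x$ may grow as $\ell^3$), we get $n_x = g < Cg$ and $d_x \le h(g)\ell^C$ for a suitable function $h$ which also dominates $n_y, d_y, m$ and absorbs all $\eta$-dependent constants. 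The characteristic hypothesis $2^{n_x+n_y}d_x^{n_x}d_y^{n_y} < p$ translates to a polynomial lower bound on $p$ in terms of $\ell$, which is guaranteed by the standing assumption on the characteristic (cf.\ the remark following Theorem~\ref{th:mainanyg}).

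The main technical step is verifying that the ideal generated by the system is zero-dimensional and radical. Zero-dimensionality is immediate: by construction, each solution encodes an $\alpha$-generic element $D \in J[\alpha]$ together with finitely many auxiliary data, and $J[\alpha]$ is finite since $\alpha$ is a non-zero isogeny. For radicality, the key observation is that each $\alpha$-generic torsion divisor $D = \sum_{i=1}^g(P_i - P_\infty)$ corresponds to exactly $g!$ solutions of the system (one per ordering of the $P_i$'s), and for each such ordering the auxiliary unknowns are uniquely determined: the coprimality of the $u_i$'s --- enforced by the genericity inequality --- lets the Chinese Remainder Theorem pin down $(P, Q)$ after the degree-parity normalization from the proof of Proposition~\ref{prop:modelanyg}, while the dummy variable $T$ is fixed as the inverse of the genericity polynomial evaluated at the $x_i$'s. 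Since none of the defining equations vanishes to higher order at these isolated points, no embedded components arise, and the ideal is radical. I expect this radicality verification to be the most delicate part of the proof, although its essence carries over from the analogous argument in~\cite{AGS17}.

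Applying Proposition~\ref{prop:prop3} to this system then returns an $\mathbb{F}_{q^e}$-geometric resolution of the $\alpha$-generic part of $J[\alpha]$ with $e = \lceil \nu(g)\log \ell\rceil = O_\eta(\log \ell)$, with success probability at least $5/6$, in time and space bounded by $\nu(g)\,\ell^{D'g}\,(\log q)^{2+\varepsilon}$, where $D'$ is the universal constant provided by Proposition~\ref{prop:prop3}. For fixed $\eta$, the factor $\nu(g)$ is absorbed by the $O_\eta$ notation and the exponent $D'g$ becomes a constant depending only on $\eta$, yielding the announced bound $O_\eta(\ell^{D}(\log q)^2)$ with $D = D'g$.
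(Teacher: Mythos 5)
Your overall structure matches the paper's proof---apply Proposition~\ref{prop:prop3} to the system of Proposition~\ref{prop:modelanyg}, verify zero-dimensionality and radicality, read off $e$ and the complexity bound---but there are two substantive problems, the first of which is serious.

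The main gap is in the exponent of $\ell$. You apply Proposition~\ref{prop:prop3} as a black box with $C=3$: since $d_x < h(g)\ell^3$, the proposition outputs a bound $\nu(g)\,\ell^{D'g}\,(\log q)^{2+\varepsilon}$, and you conclude with $D=D'g$, calling this ``a constant depending only on $\eta$.'' But this is precisely what the proposition must \emph{not} say. The entire point of the paper is that the exponent of $\ell$ (and hence of $\log q$) is an absolute constant, independent of $g$ and $\eta$---this is what removes the $g$-dependency that plagued earlier approaches and yields Theorem~\ref{th:mainanyg} with $c\le 9$. Your bound $\ell^{D'g}$, fed into the rest of the complexity analysis, would give $(\log q)^{D'g+3}$, which has an exponent growing linearly in $g$; you cannot absorb $D'g$ into the $O_\eta$ factor, which only governs the implied constant, not the exponent. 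The paper resolves this by \emph{not} treating Proposition~\ref{prop:prop3} as a black box: since the actual bound from Proposition~\ref{prop:modelanyg} is $d_x=O_\eta(\ell^{3/g})$---strictly tighter than $h(g)\ell^3$---one re-runs the proof of \cite[Prop.~3]{AGS17} with this refined input. The complexity there is governed by the multi-B\'ezout quantity $\delta\approx 2^{g+n_y}d_x^{\,g}d_y^{\,n_y}$, and $d_x^{\,g}=O_\eta(\ell^3)$ has a \emph{constant} exponent because the $1/g$ cancels the $g$. This propagation of the factor $1/g$ through the interior of the proof is the crux of the complexity gain, and your argument loses it.

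Secondly, your radicality argument is incomplete. You establish that each $\alpha$-generic torsion divisor gives exactly $g!$ solutions with uniquely determined auxiliary variables, and then assert that ``none of the defining equations vanishes to higher order'' and hence the ideal is radical. Having the expected number of geometric points does not by itself rule out nilpotents in the coordinate ring; the absence of higher-order vanishing is exactly what must be proved, not asserted. The paper's argument is different and is the rigorous route: since $\deg\alpha$ is a small multiple of $\ell$ and $p$ is large enough (by the hypothesis of Theorem~\ref{th:mainanyg}), $\deg\alpha$ is coprime to $p$, so the kernel scheme $J[\deg\alpha]$---and hence its subscheme $J[\alpha]$ and the open subscheme $S$ of $\alpha$-generic elements---is \'etale, hence reduced, hence $I$ is radical. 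One then passes from the intrinsic ideal $I$ to the ``unsymmetrized'' ideal $I^{\mathrm{unsym}}$ of the system actually written in the $(x_i,y_i)$ variables, and $\alpha$-genericity guarantees that the $g!$ preimage points are pairwise distinct, so radicality is preserved. Your CRT observation supports the last step, but the essential \'etaleness ingredient (and the role of the hypothesis $p\ge(\log q)^{c(\eta)}$) is missing.
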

\begin{proof}  
  Let us consider the sub-variety $S\subset J[\alpha]$ consisting of
  $\alpha$-generic elements, and $I$ the corresponding ideal. More precisely, we
  see $I$ as the ideal of a sub-scheme of the scheme $J[\alpha]$, itself
  subscheme of $ J[\deg\alpha]$, which is the kernel of a finite and étale map
  because $\deg\alpha$ is a small multiple of $\ell$ and is hence coprime to the
  characteristic $p$ thanks to our assumptions on the size of $p$ in the
  statement of Theorem~\ref{th:mainanyg}.

  Therefore, $I$ is 0-dimensional and radical. Since all the elements in
  $S$ have the same weight~$g$ we can use the Mumford coordinates
  $\langle u(X), v(X)\rangle$ with $\deg u=g$ and $\deg v<g-1$ as a
  local system of coordinates to represent them. But the polynomial
  system that we have built is with the $(x_i,y_i)$ coordinates, that
  is, it generates the ideal
  $I^\mathrm{unsym}$ obtained by adjoining to the equations defining
  $I$ the $2g$ equations coming from $u(X) = \prod(X-x_i)$ and
  $y_i=v(x_i)$. Then we have $\deg I^\mathrm{unsym} = g! \deg I$.
  By the $\alpha$-genericity condition, all the fibers in the variety
  have exactly $g!$ distinct points corresponding to permuting the
  $(x_i,y_i)$ which are all distinct. Therefore the radicality of $I$
  implies the radicality of $I^\mathrm{unsym}$ and we can apply the modified
  version of~\cite[Prop.~3]{AGS17} to our polynomial system.

  These systems are very similar to those presented in~\cite{AGS17}, which is
  the reason why we will be using Proposition~\ref{prop:prop3}. In this paper,
  however, we bound $d_x$ by some $h(g)\, \ell^{3/g}$ instead of $h(g)\,
  \ell^3$. Following the proof provided in~\cite[Sec.~3]{AGS17}, the factor
  $1/g$ in the exponent propagates which yields a final complexity bound
  bounded by $\nu(g)\, \ell^{D}\,(\log q)^{2+\varepsilon} $ (the exponent of
  $\ell$ is now a constant). 
  
  Indeed, by Proposition~\ref{prop:modelanyg} we now have a function $h$ such
  that $d_x\le h(g)\ell^{3/g}$ instead of $h(g)\ell^3$. As we remarked, we can
  propagate this factor $1/g$ and compute an $\F_{q^e}$-geometric resolution of
  $S$ in time and space bounded by $O_\eta(\ell^{D}(\log q)^{2+\varepsilon})$,
  with $e=O_\eta(\log\ell)$, using the result of Proposition~\ref{prop:prop3}
  with $C=3$. Note that by our definition of $O_\eta()$ the $\varepsilon$ can
  be removed.
  \end{proof} 

  \paragraph*{Remark.}
  The bottleneck of this algorithm is the computation of geometric resolutions
  of polynomial systems which is quadratic in $\delta$ the maximum of the
  degrees of the intermediate ideals $\langle f_1,\ldots,f_i\rangle$ (see for
  instance~\cite{GiuLecSal01} for a detailed complexity analysis). This
  $\delta$ is hard to assess, but it is bounded by the (multihomogeneous)
  B\'ezout bound, and we bound it by $2^{g+n_y} d_x^{g} d_y^{n_y}$
  using~\cite[Prop.~8]{AGS17} (itself derived from~\cite[Prop.
  I.1]{SafSch17}). Neglecting factors in $O_\eta(1)$, $\delta$ is in
  $O_\eta(d_x^{g})$. The exponent $D$ is essentially determined by $\delta$,
  more details we be given when explicitly computing $D$ in the next section. 

  Following the same proof but invoking Proposition~\ref{prop:modelnganyg}
  instead of Proposition~\ref{prop:modelanyg}, the same complexity bound holds
  for solving the polynomial system associated to any non-genericity tuple. Even
  if a non-zero $\alpha$-torsion element is only found after solving all the
  systems associated to non-genericity tuples, the cost for computing
  $\psi\bmod\ell$ is only multiplied by a factor in $O_\eta(1)$.

  \subsection{An explicit bound for the exponent of $\log q$}

  From the result of Proposition~\ref{prop:sysanyg}, we can compute the
  elements $D_i$ of Algorithm~\ref{algo:rmanyg} from which we deduce
  $\psi\bmod\ell$ in $O_\eta(\ell^{D}(\log q)^{2+\varepsilon})$ bit operations.
  However, the use of the geometric resolution algorithm makes this a
  Monte-Carlo algorithm while we claim that our point-counting algorithm is a
  Las Vegas one. This easily fixed because once an element $D_i$ is computed
  using this Monte-Carlo algorithm, we can check for a negligible cost that
  this $D_i$ has the required property (it is a non-zero element of order
  $\ell$ in $J[\alpha_i]$). Then if it turns out that our Monte-Carlo algorithm
  did not return a correct output, we simply repeat until it succeeds. Since
  the probability of success is lower-bounded by a positive constant, the expected
  runtime of the resulting Las Vegas algorithm is the runtime of the
  Monte-Carlo algorithm up to multiplication by a constant.

  We have proven that there exists a constant $D$ such that for any prime
  $\ell$ satisfying conditions (C1) to (C4), computing $\psi\bmod\ell$ is
  achieved within $O_\eta(\ell^{D}(\log q)^{2+\varepsilon})$ bit operations.
  Since both the number of such primes $\ell$ and the size of the largest prime to
  consider are in $O_\eta(\log q)$, the overall complexity of our
  point-counting algorithm is in $O_\eta( (\log q)^{D+3})$.

  Now it only remains to compute an explicit value for $D$, which we do by
  following the proof of~\cite[Prop.~3]{AGS17}. Going straight to the point,
  the dominant part in the complexity analysis that is done in the proof is in
  $O_\eta(d_x\delta^2\log q+\delta^2(\log q)^2)$, where $\delta$ is as in the
  previous remark. From the degree bound of Prop~\ref{prop:modelanyg}, $\delta$
  is in $O_\eta(\ell^3)$ and so the complexity of solving the systems is in
  $O_\eta(\ell^{6+3/g}\log q+\ell^6(\log q)^2)$. Since we have better bounds
  for point-couting in genus $\le 3$, we can assume that $g>3$ and since
  $\ell=O_\eta(\log q)$, the second term of the sum is the dominant one and so
  the $D$ of Proposition~\ref{prop:sysanyg} can be chosen equal to 6. From the
  previous paragraph, it follows that our point-counting algorithm runs in time
  $O_\eta((\log q)^9)$.
  
  Note that our bound on $d_x$ is pessimistic because we used the proven cubic
  bound for the degrees of Cantor's division polynomials while we expect them
  to be actually quadratic (see the final remark of~\cite[Sec.~6]{AGS17} for
  detailed experiments and conjectures). This bound was achieved thanks to
  recurrence formulas for Cantor's polynomials that are provided in~\cite{Ca94}
  but it does not seem possible to do better than a cubic bound using them. To
  prove the quadratic bounds in genus 3, another set of formulas also given
  in~\cite{Ca94} were used. However, they have a bad dependency in the genus
  $g$ and give a bound that is worse than cubic for $g\ge 5$, which is the
  reason we do not use them here. 
  
  Assuming that we can prove a quadratic bound for the degrees of Cantor's
  polynomials, $d_x$ is reduced to $O_\eta(\ell^{2/g})$ so that $\delta$ is in
  $O_\eta(\ell^2)$ and so $D$ is bounded by 4 instead of 6. Thus, the overall
  complexity would therefore be in $O_\eta(\log^7 q)$ for any $g$. 
  
  Since we have removed the dependency in $g$ from the exponent of $\log q$, it
  is natural to investigate further how the factor hidden in the $O_\eta()$
  notation grows when $g$ grows. This is what we do in the next section.

\subsection{Dependency in $g$ of the complexity}

The goal of this section is to assess the potential of our algorithm to achieve
a polynomial-time complexity both in $g$ and $\log q$ on some family of curves.
To this end, we review our complexity analysis with additional attention given
to the factors that previously vanished in the $O_\eta$.

\paragraph{Dependency in $g$ of the largest $\ell$.}

Let us first come back to the constant $C_\eta$ of Section~\ref{sec:psipi}. We have
seen that the only non-polynomial dependency in $g$ came from the matrix norm
when inverting the linear change of variables
$\psi_k=\sum_{i=0}^{g-1}a_i\eta_k^i$, which is described by the Vandermonde
matrix of the $g$ conjugates of $\eta$, denoted by $\eta_k$ for
$k\in\{1,\ldots,g\}$. Let $B$ be the inverse of this matrix, then we have
\[B_{ij}=\frac{\displaystyle\sum_{\substack{1\le k_1 <\cdots < k_{g-j} < g\\ k_1,\ldots,
k_{g-j}\ne i}} (-1)^{j-1}\eta_{k_1}\cdots\eta_{k_{g-j}}}{\eta_i\displaystyle\prod_{k\ne
i}(\eta_k-\eta_i)}.\] Let $E=\max_k(|\eta_1|,\ldots,|\eta_k|)$,
$e=1/\min_k(|\eta_1|,\ldots,|\eta_k|)$, and $D=\max_{i\ne j}\left(\vert
\eta_i-\eta_j\vert^{-1}\right)$, then we can bound the absolute value of any
entry of $B$ very roughly either by $ge(2ED)^g$ or by $ge$ if $2ED\le 1$, and
the matrix-norm of $B$ is bounded by $g$ times this previous bound. Note that
the possible denominators on the $a_i$ are also a nuisance but they are bounded
by the discriminant of $\Z[\eta]$.  This discriminant is in turn bounded by
$\max_{i\ne j}\left(\vert \eta_i-\eta_j\vert\right)^{2g}$.  Thus, the constant
$C_\eta$ can be bounded by $g^2c^g$, where $c$ has a polynomial dependency in
$\eta$ and its conjugates. 

By the prime number theorem, the set $L$ of primes such that $\prod_{\ell\in
L}\ell > 2C_\eta\sqrt{q}$ is such that the number and size of primes in $L$ is in
$\softO(g\log q)$. As we already mentioned, the primes to consider
must satisfy the conditions $(C1)$ to $(C4)$ and that may cause them to be
larger by a factor depending exponentially on $g$ {\it a priori}. Since the
complexity of computing $\chi_\pi\bmod\ell$ is polynomial in $\ell$, this
implies that the overall complexity depends exponentially in $g$ in general. 

However, a curve in the family $\C_{n,t}$ introduced in
Section~\ref{sec:rmfamily} has RM by the real subfield of $\Q(\zeta_n)$, for
which we know that the proportion of split primes is $2/(n-1)=1/g$. Therefore,
this first obstacle due to the size of primes to consider can be overcome
provided that we further strengthen the assumptions on the RM-curves we
consider.

\paragraph{Finding small elements in lattices.}
This time, the exhaustive search is no longer sufficient for our needs because
of the exponential factor $2^g$ in the size of the ball $\left\lbrace v\mid
||v||_{\infty}\le\left[\mathcal{O}_{\Q(\eta)}:\Z[\eta]\right]^{-1/g}\ell^{1/g}\right\rbrace$.
Unfortunately, the currently known algorithms for finding short vectors in time
subexponential in the dimension of the lattice have a drawback that makes them
unusable in our point-counting algorithm. Indeed, although they run faster than
the naive approach, they do not necessarily output the shortest non-zero vector
on the lattice, but an approximation that may be greater by a factor which is
also subexponential in the dimension. The size of the short vector plays a
prominent role in the complexity analysis of our point-counting algorithm as it
gives a bound on the degrees of the equations modelling $J[\alpha]$. Even if we
find an $\alpha$ whose coordinates are in $C\ell^{1/g}$, the constant factor
$C$ will cause a factor $C^g$ in the bound $2^{g+n_y} d_x^{g} d_y^{n_y}$, and
hence in the final complexity of solving the polynomial systems.

Although finding short generators of ideal in number fields is believed to be
hard in general, we may still expect to further restrict the RM curves we
consider so as to fall in a case for which the complexity of such task becomes
affordable. Examples are given in~\cite{bauch17}, where a classical algorithm
is shown to compute short generators of principal ideals in particular number
fields called multiquadratics, i.e. fields of the form
$\Q(\sqrt{d_1},\ldots,\sqrt{d_n})$, in time quasipolynomial in the degree
(which is $g$ in our context).  While we acknowledge that it is quite
speculative to hope for families of curves of arbitrary high genus with RM by a
$\Z[\eta]$ satisfying all the previous hypotheses, we do not linger on this
because the next point is much more of a concern anyway.

\paragraph{Solving polynomial systems.}

Using the strategy of Section~\ref{sec:cplxrmanyg}, the complexity is quadratic
in the bound $2^{g+n_y} d_x^{g} d_y^{n_y}$ of~\cite[Prop.~8]{AGS17}, which
includes a factor $g^{O(g^2)}$. Indeed, although the ideals of $\alpha$-torsion
have degree $\ell^2$ independent of $g$, this is not true for the number of
variables involved in our modelling, which is at least $g^2$ in the generic
case.  

However, even if none of the current complexity bounds for solving polynomial
systems is sufficient to derive a polynomial-time algorithm both in $g$ and
$\log q$, there are still reasons to hope. Indeed, while the analysis made
in~\cite{abelard} pointed out the fact that the systems themselves could have
exponential size in $g$, these fears were based on very rough estimates of
their size as straight-line programs. In fact, the cost of evaluating our
equations of the form $P+Qv_i= 0 \bmod u_i$ can be split into two parts : first
computing $u_i$ and $v_i$, which amounts to computing $\alpha( (x_i,y_i)
-P_\infty)$ in $\F_q[x_i,y_i]/(y_i^2-f(x_i))$. This is done within
$O(||\alpha||_\infty/g\log\ell+g^2)$ operations on polynomials whose sizes are
bounded by $O(g\||\alpha||_\infty \ell^{3/g})$ field elements. Then, one has to
finally reduce the degree-$g^2$ polynomial $P+Qv_i$ modulo the degree-$g$
polynomial $u_i$, which can be done naively by replacing powers of $X$ larger
than $g$, for at most $g^4$ operations on polynomials of degrees $\le g^2$ with
coefficients in $\Frac\left(\F_q[x_i,y_i]/(y_i^2-f(x_i))\right)$ whose sizes
are bounded by $2g^3\ell^{3/g}$ field elements. 

Thus, our systems have polynomial sizes in both $g$ and $\log q$, which still
fosters the hope that it could still be possible to solve them in time also
polynomial in these parameters, although we recognize that improving on the
estimate given by the multihomogeneous B\'ezout bound would be a significant
progress. Other possible workarounds to avoid an exponential dependency in $g$
could be looking for easier instances in which we could model the
$\alpha$-torsion by even smaller polynomial systems, or cases for which there
are simpler ways of obtaining a generic $\alpha$-torsion divisor than the one
we used.

\section{Future work}

Based on the facts that the genus-3 RM point-counting algorithm of~\cite{AGS18}
is practical and that we extended it to arbitrary genus with a similar
complexity (at least conjecturally), one could hope to use it for practical
computations in genus larger than 3. In the current state, the exponential
dependency in $g$ and the difficulties that were already encountered in genus 3
make it unrealistic, and we also lack an open and competitive implementation of
the geometric resolution algorithm.

Proving the quadratic bound on the degrees of Cantor's polynomials still has to
be done in order to prove that we have a complexity result close to the
genus-3 case. Cantor's original paper is quite long and technical but also
provides recurrence formulas that remains relatively simple. However, a
straightforward use of these formulas is not sufficient to establish tight
bounds in genus larger than 3. Maybe a deeper and more technical analysis of
intermediate results presented in Cantor's paper~\cite{Ca94} could yield
sharper bounds but we leave this subject to further research.

An interesting problem that could have both practical and theoretical impact in
terms of complexity is to find new (families of) curves with explicit real
multiplication. While RM by multiquadratics is theoretically interesting to
control the exponential dependency in $g$, finding curves with RM by an order in
which a small prime (say $\ell\le 11$) happens to be totally split could be a
first step towards practical experiments in genus $\ge 4$.

Lastly, even if we were to find a way of solving the polynomial systems within
a polynomial (or at least subexponential) complexity, the number of non-generic
systems is still exponential in $g$. Heuristically, non-genericity should never
be a problem, but in order to reach a proven subexponential complexity, one
also needs to find another way of dealing with non-genericity.

\paragraph{Acknowledgements.}
Most of this work already appears as Chapter VII in the author's thesis
manuscript~\cite{abelard}. As such, the author received helpful feedback from
his advisors Pierrick Gaudry and Pierre-Jean Spaenlehauer; and from his thesis
referees Christophe Ritzenthaler and Fréderik Vercauteren.  The author is also
grateful to Benjamin Smith and David Kohel for pointing out references and for
fruitful discussions. The author is indebted to the anonymous reviewers for numerous
improvements to the clarity of the paper as well as for pointing out an error
in the exponent of $\log q$. 

\bibliographystyle{plain}
\bibliography{biblio}
\end{document}